\theoremstyle{plain}
\newtheorem{Thm}{Theorem}[section]
\newtheorem{Lem}[Thm]{Lemma}
\newtheorem{Cor}[Thm]{Corollary}
\theoremstyle{definition}
\newtheorem{Def}[Thm]{Definition}
\newtheorem{Rk}[Thm]{Remark}
\newtheorem{Ex}[Thm]{Example}
\newtheorem{Obs}[Thm]{Observation}
\newtheorem{Constr}[Thm]{Construction}
\newtheorem{Setup}[Thm]{Setup}
\numberwithin{equation}{section}
\newcommand{\pr}{\operatorname{pr}}
\newcommand{\gr}{\operatorname{gr}}
\newcommand{\ini}{\operatorname{in}}
\newcommand{\NP}{\operatorname{NP}}
\renewcommand{\a}{\alpha}
\renewcommand{\b}{\beta}
\newcommand{\g}{\gamma}
\renewcommand{\d}{\delta}
\renewcommand{\l}{\lambda}
\renewcommand{\r}{\rho}
\newcommand{\D}{\Delta}
\newcommand{\KK}{{\mathbb K}}
\newcommand{\QQ}{{\mathbb Q}}
\newcommand{\RR}{\mathbb R}
\newcommand{\ZZ}{{\mathbb Z}}
\newcommand{\cE}{\mathcal{E}}
\newcommand{\cF}{\mathcal{F}}
\newcommand{\cM}{\mathcal{M}}
\newcommand{\cP}{\mathcal P}
\newcommand{\Face}{\cF}
\newcommand{\Edge}{\cE}
\newcommand{\fm}{\mathfrak m}
\newcommand{\gqz}{{\geq 0}}
\newcommand{\ovl}{\overline}
\newcommand{\red}[1]{\textcolor{red}{#1}}
\begin{document}

 \author{Bernd Schober}
\address{Bernd Schober\\
Institut f\"ur Algebraische Geometrie\\
Leibniz Universit\"at Hannover\\
Welfengarten 1\\
30167 Hannover\\
Germany}
\email{schober@math.uni-hannover.de}

\title{Generalized Loose Edge Factorization Theorems}

\begin{abstract}
	We extend a factorization theorem by Gwo\'zdziewicz and Hejmej from the ring of formal power series to any complete regular local ring $ R $. 
	More precisely, let $ f \in R $
	and assume that its Newton polyhedron has a loose edge such that the initial formal of $ f $ along the latter is a product of two coprime polynomials,
	where one of them is not divided by any variable.
	Then this provides a factorization of $ f $ in $ R $. 
	As a consequence we obtain a factorization theorem for Weierstra{\ss} polynomials with coefficients in $ R $,
	which generalizes an earlier result by Rond and the author.
\end{abstract}

\subjclass[2010]{13A05, 12E05, 13F25, 14B05, 32S25}

\thanks{The author is supported by the DFG-project "Order zeta functions and resolutions of singularities" (DFG project number: 373111162)}

\maketitle

\section{Introduction}
The goal of this article is a generalization of a factorization result by Gwo\'zdziewicz and Hejmej 
from $ \KK[[x_1, \ldots, x_n]] $ to any (complete) regular local ring.
(See the results below).
We reduce their proof to the essence which is the fact that 
$ \KK[[x_1, \ldots, x_n]] $ is complete
and provide a new viewpoint via projections of the Newton polyhedron.
Note that $ R $ may even have mixed characteristics.

Let $ ( R, \fm, \KK = R/\fm ) $ be a regular local ring (not necessarily complete) with regular sytem of parameters $ (x) = (x_1, \ldots, x_n) $.
We denote by $ \widehat R $ the $ \fm $-adic completion of $ R $.
(If $ R $ is complete, we have $ \widehat R = R $).
For an element $ f \in R $, one can define the notion of a {Newton polyhedron} $ \NP(f) \subset \RR^n_\gqz $.
The latter is a closed convex set with the property 
$ \NP(f) + \RR^n_\gqz = \NP(f) $ and
coming from the set of exponents of an expansion of $ f $.
A {\em loose edge} of $ \NP(f) $ is a compact face of dimension one, say $ \Edge \subset \NP(f) $, that is not contained in any compact face of $ \NP(f) $ of dimension $ \geq 2 $. 
(In fact, we study loose edges in the slightly more general setting of $ F $-subsets later).
Associated to such an edge, we have the initial form $ \ini_\cE (f) $ of $ f $ (determined by those terms of an expansion contributing to the edge in the polyhedron)
which lies in a graded ring $ \gr_\cE (R) $ that is isomorphic to a polynomial ring $ \KK[X_1, \ldots, X_n ]$. 
(In the remaining introduction, we use this identification without mentioning).
For more details on these objects, we refer to sections \ref{sec:2} and \ref{sec:3}.

\begin{Thm}
	\label{Thm_1.1}
	Let $ R $ be a regular local ring.
	Let $ f \in R $ be an element in $ R $ such that the Newton polyhedron $ \NP(f) $ has a loose edge $ \cE $. 
	Suppose that the initial form $ \ini_\cE (f) $ of $ f $ along $ \cE $
	is a product of two coprime polynomials $ G $ and $ H $,
	where $ G $ is not divided by any variable.
	Then there exist elements $ \widehat g, \widehat h \in \widehat R $ in the completion of $ R $ such that 
	\[
		f = \widehat g \cdot \widehat h \ \ \ \mbox{ in } \widehat R
	\] 
	and $ \ini_{\cE_1} (\widehat g) = G $ and $ \ini_{\cE_2} (\widehat h) = H $, for certain faces $ \cE_1, \cE_2 $ of dimension at most one such that $ \cE= \cE_1 + \cE_2 $. 
\end{Thm}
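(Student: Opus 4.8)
The plan is to construct $ \widehat g,\widehat h $ by a Newton-type iteration inside $ \widehat R $: one starts from polynomial lifts of $ G $ and $ H $ and corrects the error one graded piece at a time, each correction step amounting to a Bézout-type identity in a polynomial ring. The only genuine difficulty is the solvability of these correction equations, and this is exactly what the hypotheses ``$ \cE $ loose'' and ``$ G $ not divided by any variable'' are there to secure; everything else is bookkeeping with Newton polyhedra together with the completeness of $ \widehat R $, and the argument never refers to $ \cha R $, so the mixed-characteristic case costs nothing extra. To begin, fix a weight vector $ w\in\ZZ_{>0}^n $ in the relative interior of the normal cone of $ \cE $ in $ \NP(f) $, so that $ \ini_w(f)=\ini_\cE(f)=G\cdot H $ and $ \gr_w(\widehat R)=\gr_w(R)\cong\KK[X_1,\dots,X_n] $ is the graded ring in which $ G $ and $ H $ are homogeneous. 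Since $ \widehat R $ is complete for the $ w $-filtration, it suffices to produce $ g,h\in\widehat R $ with $ \ini_w(g)=G $, $ \ini_w(h)=H $ and $ w\text{-}\ord(f-gh) $ arbitrarily large; the limit then factors $ f $, and the remaining assertion is automatic, since $ \cE_1:=\NP(G) $ is the $ w $-minimal face of $ \NP(\widehat g) $ with $ \ini_{\cE_1}(\widehat g)=G $, likewise $ \cE_2:=\NP(H) $ and $ \widehat h $, and $ \cE_1+\cE_2=\NP(G)+\NP(H)=\NP(GH)=\cE $.

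Before iterating I would read off the combinatorial structure forced by the two hypotheses. As $ G $ is $ w $-homogeneous, $ \cE_1=\NP(G) $ is a segment parallel to $ \cE $; as no variable divides $ G $, its two endpoints must have disjoint supports, so dividing out the lattice length of $ \cE_1 $ produces monomials $ u,v $ in disjoint sets of variables with $ G=\Phi(u,v) $ for a binary form $ \Phi $ of full degree, i.e. with non-zero coefficients on both $ u^{\deg\Phi} $ and $ v^{\deg\Phi} $. The same two monomials govern $ \ini_\cE(f) $, so $ H $ equals a monomial times a binary form $ \Psi(u,v) $ with $ \gcd(\Phi,\Psi)=1 $. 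Looseness enters through projecting the Newton polyhedron along the direction of $ \cE $ (in the style of the $ F $-subset formalism developed below): because $ \cE $ is contained in no compact face of dimension $ \geq 2 $, every face of $ \NP(f) $ strictly containing $ \cE $ is unbounded, and transverse to $ \cE $ the polyhedron is locally governed by essentially two directions only — this is the promised new viewpoint, and it reduces the error control to an effectively two-variable situation. In that two-variable picture the ideal generated by $ G $ and $ H $ is primary to the maximal ideal: $ V(\Phi)\cap V(\Psi) $ meets no coordinate axis away from the origin because $ \Phi $ has full degree, and contains no further point because $ \gcd(\Phi,\Psi)=1 $. (For $ (G,H) $ inside the full ring $ \KK[X_1,\dots,X_n] $ this fails badly, which is precisely why the projection is needed.)

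The iteration then proceeds as expected. Let $ g_0,h_0 $ be the termwise polynomial lifts of $ G,H $; then $ \ini_w(g_0h_0)=\ini_w(f) $, so $ e_0:=f-g_0h_0 $ satisfies $ w\text{-}\ord(e_0)>m:=w\text{-}\ord(f) $ and $ \NP(e_0)\subseteq\NP(f) $, the loose hypothesis confining $ \NP(e_0) $ to the sector transverse to $ \cE $ where the two-variable reduction applies. Inductively, given $ g_k,h_k $ with $ \ini_w(g_k)=G $, $ \ini_w(h_k)=H $, $ \NP(f-g_kh_k)\subseteq\NP(f) $ and $ w\text{-}\ord(f-g_kh_k)=m+k $, write $ E_k:=\ini_w(f-g_kh_k) $ and solve, using the transversal primary ideal, the homogeneous equation $ E_k=G\,b_k+H\,a_k $ with $ a_k,b_k $ of $ w $-degrees $ w\text{-}\ord(G)+k $ and $ w\text{-}\ord(H)+k $; set $ g_{k+1}:=g_k+\widetilde a_k $ and $ h_{k+1}:=h_k+\widetilde b_k $ for lifts $ \widetilde a_k,\widetilde b_k $. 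Then the initial forms are unchanged, the new terms — including the quadratic one $ \widetilde a_k\widetilde b_k $ — keep the error's polyhedron inside $ \NP(f) $, and $ w\text{-}\ord(f-g_{k+1}h_{k+1})\geq m+k+1 $. Passing to the limit in the complete ring $ \widehat R $ yields $ \widehat g,\widehat h $ with $ f=\widehat g\,\widehat h $, which finishes the proof.

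The step I expect to be the real obstacle is the solvability of $ E_k=G\,b_k+H\,a_k $. In $ \KK[X_1,\dots,X_n] $ the ideal $ (G,H) $ is far from $ \fm $-primary, so this cannot follow from coprimality alone; it becomes available only after the loose edge is used to project away the edge direction into a two-variable Weierstra{\ss}-type situation, where ``$ G $ not divided by any variable'' is exactly what forces $ (\Phi,\Psi) $ to be $ \fm $-primary. Making rigorous that (i) the loose condition keeps every error $ f-g_kh_k $ within the region where this reduction is valid, and (ii) the $ w $-degrees occurring are large enough — relative to the regularity of that primary ideal — for the membership $ E_k\in(G,H) $ to hold already at the first correction, is where the substance of the proof lies.
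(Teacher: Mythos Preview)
Your overall architecture matches the paper's proof: a Newton-type iteration in $\widehat R$, with the loose-edge hypothesis confining the error and the coprimality plus ``$G$ not divided by any variable'' hypothesis securing each correction. Where you diverge --- and where the argument as written does not close --- is precisely the step you flag as ``the real obstacle''. Your proposed mechanism is to pass to a two-variable picture $G=\Phi(u,v)$, $H=X^\gamma\Psi(u,v)$, observe that $(\Phi,\Psi)$ is $\fm$-primary in $\KK[u,v]$, and deduce $E_k\in(G,H)$. But $\fm$-primary only yields $(u,v)^N\subseteq(\Phi,\Psi)$ for some $N$, which is exactly why you worry about (ii). The paper does not argue via a primary ideal. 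Instead it refines the $L$-grading by a second grading: projecting $\ZZ^n_{\geq 0}$ along the edge direction $\d$ decomposes $\gr_L(R)=\bigoplus_{w} S_{\d,w}$ into finite-dimensional pieces. The loose-edge hypothesis (Lemma~\ref{Lem:loose=>orthant}: the projected polyhedron is $\d$-orthant) forces every graded piece of the error to lie in some $S_{\d,u+w+i}$ with $i\in\cM_\d$, and the solvability statement is the \emph{exact} equality $G\,S_{\d,w+i}+H\,S_{\d,u+i}=S_{\d,u+w+i}$ for every $i$ (Lemma~\ref{Lem_2.5}), proved by a dimension count (Lemma~\ref{Lem_2.4}: $\dim S_{\d,u+w'}=\dim S_{\d,u}+\dim S_{\d,w'}-1$ whenever $S_{\d,u}$ contains two coprime monomials --- this is precisely where ``not divided by any variable'' enters) together with a short exact sequence. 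There is no degree threshold; your concern (ii) evaporates once the statement is formulated in these terms.

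A secondary point: the inductive invariant you carry, $\NP(f-g_kh_k)\subseteq\NP(f)$, is neither what you need nor obviously preserved (the lifts and their products can have support outside $\NP(f)$). The invariant the paper actually maintains is on the $\d$-projection: $\NP(f-g_kh_k)_\d$ stays in the $\d$-orthant with vertex $\pr_\d(\cE)$, which the $S_{\d,w}$-bookkeeping gives for free once the corrections are chosen inside the appropriate $S_{\d,w+i}$ and $S_{\d,u+i}$.
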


We cannot avoid to pass to the completion since the construction of the elements $ \widehat g $ and $ \widehat h $ 
is not necessarily finite and hence may lead to an infinite series.

\begin{Cor}
	\label{Cor_1.2}
	Assume that the Newton polyhedron of $ f \in R $ has a loose edge and at least three vertices.
	Then $ f $ is not irreducible in $ \widehat R $.
\end{Cor}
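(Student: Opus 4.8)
The plan is to deduce Corollary \ref{Cor_1.2} directly from Theorem \ref{Thm_1.1}, the only point being to arrange the hypotheses of the theorem so that the factorization it produces is genuinely nontrivial. So suppose $\NP(f)$ has a loose edge $\cE$. First I would note that $\cE$ has exactly two endpoints, say $v_1$ and $v_2$, and that by assumption $\NP(f)$ has a third vertex $v_0 \notin \{v_1, v_2\}$. The strategy is to apply the theorem to $\cE$, so I must exhibit a factorization $\ini_\cE(f) = G \cdot H$ into coprime polynomials with $G$ divisible by no variable.

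The key step is the observation that $\ini_\cE(f)$, viewed in the polynomial ring $\gr_\cE(R) \cong \KK[X_1, \ldots, X_n]$, is (up to a monomial factor) a one-variable polynomial: the edge $\cE$ spans a line in direction of some primitive vector, and after the monomial change coming from the two endpoints of $\cE$ the initial form becomes a homogeneous-type expression in essentially a single variable, hence a univariate polynomial $P(T)$ of degree $\geq 1$ times a monomial $X^a$. Since $\cE$ is a \emph{compact} edge (not a ray), $P$ is a genuine polynomial of positive degree; I would want to argue it actually has degree $\geq 1$ with at least two monomials so that it is not itself a monomial — but even a degree-one $P$ suffices if it is coprime to the leftover monomial. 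I then pull out all variables dividing $\ini_\cE(f)$ to form $H$ (a monomial, or more precisely the product of the variable-powers, possibly times part of $P$), and let $G$ be the complementary factor containing the ``interesting'' part of $P$ not divisible by any variable. Coprimality of $G$ and $H$: by construction $G$ is coprime to every variable, and $H$ I will choose to be (a divisor of) a monomial, so $\gcd(G,H)=1$ in $\KK[X_1,\dots,X_n]$.

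Having the factorization $\ini_\cE(f) = G H$ with $G,H$ coprime and $G$ divisible by no variable, Theorem \ref{Thm_1.1} gives $f = \widehat g \cdot \widehat h$ in $\widehat R$ with $\ini_{\cE_1}(\widehat g) = G$, $\ini_{\cE_2}(\widehat h) = H$, and $\cE = \cE_1 + \cE_2$. To conclude non-irreducibility I must show neither $\widehat g$ nor $\widehat h$ is a unit. If $\widehat h$ were a unit, its initial form $H$ would be a nonzero constant, so $H$ is a unit in the polynomial ring; but then $\ini_\cE(f) = G$ up to a unit, meaning $\ini_\cE(f)$ is divisible by no variable. Here is where the third vertex $v_0$ enters: I claim that if $\NP(f)$ has a vertex $v_0$ off the line through $\cE$, then $\ini_\cE(f)$ \emph{is} divisible by some variable (equivalently $\cE$ does not meet all coordinate hyperplanes needed to be ``apex-like''), giving a contradiction. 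Dually, if $\widehat g$ were a unit then $G$ is a nonzero constant, contradicting $\deg P \geq 1$ (equivalently, $\cE$ being a positive-length compact edge forces $\ini_\cE(f)$ to be non-monomial in the relevant direction). Hence both factors are non-units and $f$ is reducible, hence not irreducible, in $\widehat R$.

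The main obstacle is the combinatorial bookkeeping in the previous paragraph: making precise how the existence of a third vertex $v_0$ of $\NP(f)$ forces $\ini_\cE(f)$ to be divisible by a variable, and making precise that a compact edge of positive length forces $\ini_\cE(f)$ to have at least two distinct monomials (so that the univariate polynomial $P$ has degree $\geq 1$ and the splitting $G H$ is into non-units). Both facts are elementary consequences of the definition of $\NP$, $\ini_\cE$, and $\gr_\cE(R)$ recalled in sections \ref{sec:2} and \ref{sec:3}, but they need to be stated carefully; once they are in hand, the corollary is immediate from Theorem \ref{Thm_1.1}. I would also remark that ``at least three vertices'' is exactly what rules out the degenerate cases where $\NP(f)$ is a single shifted orthant (one vertex) or has all its compact structure concentrated on one edge (two vertices), in which cases $f$ may well be irreducible.
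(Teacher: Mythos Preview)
Your approach is correct and is essentially the one the paper (following \cite{GH}) uses: factor $\ini_\cE(f)=X^c\cdot G$ with $c_i=\min\{\alpha_i,\beta_i\}$ so that $G$ is divisible by no variable, set $H=X^c$, and apply Theorem~\ref{Thm_1.1}. The ``combinatorial bookkeeping'' you flag as the main obstacle --- that a third vertex forces some $c_i>0$, i.e.\ that $\ini_\cE(f)$ is divisible by some variable --- is exactly the contrapositive of Lemma~\ref{Lem_2.2}, which is the one ingredient the paper cites for this corollary; you should invoke it rather than re-derive it. The detour through a one-variable polynomial $P(T)$ and the suggestion that $H$ might absorb ``part of $P$'' are unnecessary (and the latter could spoil coprimality): just take $H=X^c$ and $G=\ini_\cE(f)/X^c$.
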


\begin{Cor}
	\label{Cor_1.3}
	Assume that the Newton polyhedron of $ f \in R $ has a loose edge $ \cE $.
	If $ f $ is irreducible in $ \widehat R $, then $ \cE $ is the only compact edge of $ \NP(f) $ and 
	\[
		\ini_\cE (f) = \epsilon \cdot  P^k,
	\] 
	where $ \epsilon \in \KK^\times $ is a unit and 
	$ P \in \KK[X_1, \ldots, X_n] $ is an irreducible polynomial.
	Moreover, if the residue field $ \KK $ is algebraically closed, $ P = X^\a +  \l X^\b  $, for some $ \l \in \KK^\times $ and $ \a - \b \in \ZZ^n $ is a primitive lattice vector.
\end{Cor}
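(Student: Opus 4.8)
The plan is to derive both parts from Theorem~\ref{Thm_1.1} and Corollary~\ref{Cor_1.2}. For the claim that $ \cE $ is the only compact edge I would argue by contraposition: if $ f $ is irreducible in $ \widehat R $ and $ \NP(f) $ has a loose edge $ \cE $, then Corollary~\ref{Cor_1.2} forbids $ \NP(f) $ from having three or more vertices; since the two endpoints of the compact edge $ \cE $ are distinct vertices of $ \NP(f) $, it follows that $ \NP(f) $ has exactly two vertices $ A \neq B $ and $ \cE = \operatorname{conv}\{A,B\} $. As every compact face of $ \NP(f) $ is the convex hull of the vertices it contains, the only compact faces are $ \{A\} $, $ \{B\} $ and $ \cE $, so $ \cE $ is the unique compact edge.

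Next I would determine $ \ini_\cE(f) $. By unique factorization in $ \KK[X_1,\dots,X_n] $ write
\[
	\ini_\cE(f) \,=\, \epsilon \cdot X^{\mu} \cdot R_1^{e_1} \cdots R_s^{e_s},
\]
with $ \epsilon \in \KK^\times $, $ \mu \in \ZZ^n_\gqz $, and $ R_1, \dots, R_s $ pairwise non-associate irreducible polynomials, none of which is divisible by a variable. Since $ A $ and $ B $ lie in the support of $ \ini_\cE(f) $ and $ A \neq B $, this initial form is not a monomial, hence $ s \geq 1 $. Suppose $ s \geq 2 $, or $ s = 1 $ and $ \mu \neq 0 $. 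Then $ G := R_1^{e_1} $ and $ H := \ini_\cE(f)/G $ are coprime, non-constant, and $ G $ is divisible by no variable, so Theorem~\ref{Thm_1.1} yields $ f = \widehat g \cdot \widehat h $ in $ \widehat R $ with $ \ini_{\cE_1}(\widehat g) = G $ and $ \ini_{\cE_2}(\widehat h) = H $ for compact faces $ \cE_1, \cE_2 $ (they are Minkowski summands of the bounded set $ \cE $). Neither $ \widehat g $ nor $ \widehat h $ is a unit: a unit of $ \widehat R $ has Newton polyhedron $ \RR^n_\gqz $, whose only compact face is the origin and whose initial form there is a constant, contradicting that $ G $ and $ H $ are non-constant. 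This contradicts the irreducibility of $ f $. Hence $ s = 1 $ and $ \mu = 0 $, that is $ \ini_\cE(f) = \epsilon \cdot R_1^{e_1} $, which has the asserted shape with $ P := R_1 $ and $ k := e_1 $.

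For the final assertion, assume $ \KK $ is algebraically closed. Since the support of $ \ini_\cE(f) $ lies on the segment $ \cE $, whose primitive direction vector I call $ w $, one has $ \ini_\cE(f) = X^{A}\, Q(X^{w}) $ for a univariate polynomial $ Q $ with $ Q(0) \neq 0 $ (the two endpoints of $ \cE $ contribute the lowest and highest terms of $ Q $). As $ w $ is primitive, a monomial change of coordinates turns $ X^{w} $ into a variable, so the irreducible factors of $ \ini_\cE(f) $ correspond, after clearing denominators, exactly to those of $ Q $. Combined with the previous step this forces $ Q(T) = a\,(T-\rho)^{k} $ for some $ a, \rho \in \KK^\times $; clearing denominators in $ X^{w} - \rho $ then gives $ P = X^{w^{+}} - \rho\, X^{w^{-}} $, that is $ P = X^{\a} + \l X^{\b} $ with $ \a = w^{+} $, $ \b = w^{-} $, $ \l = -\rho $, and $ \a - \b = w $ a primitive lattice vector.

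I expect the main obstacle to be arranging the reduction so that Theorem~\ref{Thm_1.1} is applied to a genuinely nontrivial factorization. Because the theorem requires one of the two factors to be divisible by no variable, one cannot simply split off the monomial part $ X^{\mu} $; one must extract a true irreducible non-monomial factor $ R_1^{e_1} $ and then verify that the complementary factor $ \ini_\cE(f)/R_1^{e_1} $ is still non-constant, for otherwise $ \widehat h $ could be a unit and no contradiction would follow. The remaining bookkeeping---reading "unit in $ \widehat R $" off the Newton polyhedron, and, in the last step, using primitivity of $ w $ to transport the irreducible factorization of $ \ini_\cE(f) $ to that of the one-variable polynomial $ Q $---is routine.
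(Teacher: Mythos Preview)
Your argument is correct and is exactly the natural deduction one expects: Corollary~\ref{Cor_1.2} forces two vertices, Theorem~\ref{Thm_1.1} applied to a non-monomial irreducible factor of $\ini_\cE(f)$ rules out any further coprime factor (monomial or not), and over an algebraically closed field the edge-polynomial becomes a one-variable polynomial in $X^w$ with a single root. The paper itself gives no separate proof of Corollary~\ref{Cor_1.3}; it only remarks that the corollaries carry over verbatim from \cite{GH}, so there is nothing different to compare your approach against. One small simplification you could note: once you know $\NP(f)$ has exactly the two vertices $A,B$, irreducibility of $f$ in $\widehat R$ already forces $\min\{A_i,B_i\}=0$ for every $i$ (otherwise $x_i\mid f$), so the monomial part $X^\mu$ is automatically trivial and the case split ``$s=1$ and $\mu\neq 0$'' never occurs.
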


An edge $ \Edge $ is called {\em descendant} if it is parallel to a vector $ \d = (\d_1, \ldots, \d_n ) \in \RR^n $ such that
$ \d_i \geq 0 $, for $  1 \leq i \leq n - 1 $, and 
$ \d_n < 0 $.

\begin{Thm}
	\label{Thm_1.4}
	Let $ f \in R[z] $.
	Assume that the Newton polyhedron $ \NP(f) $ has a descendant, loose edge $ \cE $.
	If $ \ini_\cE (f) $ is a product of two coprime polynomials $ G, H  \in \KK[X_1, \ldots, X_n, Z]$,
	where $ G $ is monic with respect to $ Z $,
	then there exist $ \widehat g, \widehat  h \in \widehat R [z] $ such that
	\[
		f = \widehat g \cdot \widehat h \ \ \ \mbox{ in } \widehat R[z]
	\]
	where $ \widehat  g \in \widehat R [z] $ is monic, 
	$ \ini_{\cE_1} (\widehat g) = G $ and $ \ini_{\cE_2} (\widehat h) = H $,
	 for certain faces $ \cE_1, \cE_2 $ of dimension at most one such that $ \cE= \cE_1 + \cE_2 $. 
\end{Thm}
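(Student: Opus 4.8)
The plan is to view $f$ inside the complete regular local ring $\widehat R[[z]]$, to factor it there by Theorem~\ref{Thm_1.1}, and then to use Weierstra{\ss} preparation and division to split off a monic polynomial factor while keeping the complementary factor in $\widehat R[z]$. First I would note that $\widehat R[[z]]$ is complete regular local with regular system of parameters $(x_1,\ldots,x_n,z)$, that $f\in R[z]\subseteq\widehat R[[z]]$, and that the Newton polyhedron of $f$, the loose edge $\cE$, and the initial form $\ini_\cE(f)=G\cdot H\in\KK[X_1,\ldots,X_n,Z]$ are unaffected by this inclusion. Setting aside the trivial case $\deg_Z G=0$ (then $G=1$ and $f=1\cdot f$ already works), put $d:=\deg_Z G\ge 1$. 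Since $G\cdot H$ and $G$ are monic in $Z$ so is $H$, and as $G,H$ are coprime at most one of them is divisible by $Z$; let $G^{\ast}$ denote one of $G,H$ with $Z\nmid G^{\ast}$. Being monic in $Z$ of positive degree, $G^{\ast}$ has a pure power of $Z$ in its support and is therefore divisible by no variable, so Theorem~\ref{Thm_1.1} applies to the coprime factorization $\ini_\cE(f)=G^{\ast}\cdot(\text{the other factor})$ and produces $\phi_1,\phi_2\in\widehat R[[z]]$ with $f=\phi_1\phi_2$, faces $\Phi_1,\Phi_2$ of dimension $\le 1$ with $\cE=\Phi_1+\Phi_2$, $\ini_{\Phi_1}(\phi_1)=G^{\ast}$ and $\ini_{\Phi_2}(\phi_2)=$ the remaining factor.

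The central step would be to show that the factor carrying the distinguished polynomial $G$ is $z$-regular. Relabel $\{\phi_1,\phi_2\}=\{\psi,\psi'\}$ and $\{\Phi_1,\Phi_2\}=\{\Psi,\Psi'\}$ so that $\ini_\Psi(\psi)=G$ and $\ini_{\Psi'}(\psi')=H$. I claim $\psi$ is $z$-regular of order $d$. Indeed, the monomial $z^d$ occurs in $G=\ini_\Psi(\psi)$ with coefficient $1$, so the lattice point $(0,\ldots,0,d)$ lies in the support of $\ini_\Psi(\psi)$ and, having the largest $z$-coordinate there, is an endpoint of $\Psi$ and hence a vertex of $\NP(\psi)$; since $\NP(\psi)+\RR^{n+1}_\gqz=\NP(\psi)$, this forces $(0,\ldots,0,j)\notin\NP(\psi)$ for every $j<d$, so the coefficient of $z^j$ in $\psi$ fails to be a unit for $j<d$ while the coefficient of $z^d$ is a unit congruent to $1$ modulo $\fm$. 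Thus the image of $\psi$ in $\KK[[z]]$ equals $z^d$ times a unit. (This is the step in which the normalization ``$G$ monic in $Z$'' — together with the descendant position of $\cE$, which keeps the top vertex of $\Psi$ on the $z$-axis — does the work.) Weierstra{\ss} preparation in $\widehat R[[z]]$ then yields $\psi=w\cdot\widehat g$ with $w\in\widehat R[[z]]^{\times}$ and $\widehat g\in\widehat R[z]$ monic of degree $d$; putting $\widehat h:=w\cdot\psi'$ gives $f=\psi\psi'=\widehat g\cdot(w\psi')=\widehat g\cdot\widehat h$ with $\widehat g$ monic. Finally, since $f\in\widehat R[z]$ and $\widehat g$ is a monic polynomial, uniqueness of division by $\widehat g$ — ordinary polynomial division, which in $\widehat R[[z]]$ coincides with Weierstra{\ss} division — shows $\widehat h=f/\widehat g\in\widehat R[z]$.

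It then remains to match the initial forms. Since $w$ is a unit, $\NP(\widehat g)=\NP(\psi)$, so $(0,\ldots,0,d)$ is still a vertex of $\NP(\widehat g)$ lying in $\Psi$ and $\ini_\Psi(\widehat g)$ is again monic in $Z$ of degree $d$; the product rule for initial forms along the linear form defining $\cE$ gives $G=\ini_\Psi(\psi)=\ini_\Psi(w)\cdot\ini_\Psi(\widehat g)$, and since $\cE$ is compact the initial form $\ini_\Psi(w)$ of the unit $w$ involves no $Z$, so comparing leading $Z$-coefficients forces $\ini_\Psi(w)=1$ and hence $\ini_\Psi(\widehat g)=G$. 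As the initial form of $w$ depends only on that linear form, also $\ini_{\Psi'}(w)=1$, whence $\ini_{\Psi'}(\widehat h)=\ini_{\Psi'}(w)\cdot\ini_{\Psi'}(\psi')=H$; renaming $\Psi,\Psi'$ as $\cE_1,\cE_2$ we have $\cE_1+\cE_2=\Phi_1+\Phi_2=\cE$, as required. The step I expect to be the main obstacle is the bookkeeping in the case $Z\mid G$: then $G$ cannot be fed into Theorem~\ref{Thm_1.1} directly, so the monic data ends up attached to the \emph{other} factor of the factorization, and one has to keep careful track of which of $\phi_1,\phi_2$ becomes the monic polynomial and verify that the Minkowski decomposition $\cE=\cE_1+\cE_2$ and the normalizations $\ini_\Psi(w)=\ini_{\Psi'}(w)=1$ both survive the swap; the genuinely degenerate sub-case $H=1$ (where $\ini_\cE(f)=G$ is already monic in $Z$) should be treated by applying Weierstra{\ss} preparation directly to $f$. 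Once the $z$-regularity of the distinguished factor is in hand, the remaining verifications are routine.
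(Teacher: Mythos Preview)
Your overall architecture---factor $f$ inside $\widehat R[[z]]$, then apply Weierstra{\ss} preparation to the factor carrying $G$ and absorb the unit into the other factor---is exactly the route the paper takes. The difficulty is in the first step, where you try to invoke Theorem~\ref{Thm_1.1} as a black box.

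The assertion ``since $G\cdot H$ and $G$ are monic in $Z$, so is $H$'' is not justified: nothing in the hypotheses forces $\ini_\cE(f)=G\cdot H$ to be monic in $Z$. The upper endpoint of the descendant edge $\cE$ need not lie on the $Z$-axis. Concretely, take $n=1$ and $f=xz^2+x^2z+x^5\in\KK[[x]][z]$; the segment from $(1,2)$ to $(2,1)$ is a descendant loose edge with $\ini_\cE(f)=XZ^2+X^2Z$, and the factorization $G=Z^2+XZ$, $H=X$ satisfies all hypotheses of Theorem~\ref{Thm_1.4} ($G$ monic in $Z$, $G$ and $H$ coprime). Here $Z\mid G$ and $X\mid H$, so \emph{neither} factor is free of variable divisors, and Theorem~\ref{Thm_1.1} does not apply to this pair. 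Your case analysis on $Z\mid G$ versus $Z\nmid G$ therefore does not close: when $Z\mid G$ you cannot fall back on $H$, because $H$ may well be divisible by some $X_i$.

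The paper avoids this by not using Theorem~\ref{Thm_1.1} as a black box. Instead it reruns the inductive construction in that proof, replacing the surjectivity input Lemma~\ref{Lem_2.5} (which needs ``$G$ not divisible by any monomial'') by Lemma~\ref{Lem_2.8} (which needs only ``$G$ monic in $X_n$'', here $X_n=Z$). That lemma handles the case $Z\mid G$ directly and yields the factorization $f=\overline g\cdot\overline h$ in $\widehat R[[z]]$ with $\ini_{\cE_1}(\overline g)=G$; from there your Weierstra{\ss}-preparation argument goes through unchanged. So the missing idea is precisely Lemma~\ref{Lem_2.8} (and its companion Lemma~\ref{Lem_2.7}); once you feed that into the iterative scheme of Theorem~\ref{Thm_1.1}, the rest of your write-up is correct and matches the paper.
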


In Theorems \ref{Thm_1.1} and \ref{Thm_1.4}, 
$ \cE_1 $ is an compact edge of $ \NP(\widehat g) $ parallel to $ \cE $
and $ \cE_2 $ is either a compact edge of $ \NP(\widehat h) $ 
parallel to $ \cE$ or a vertex.

Theorems \ref{Thm_1.1}, \ref{Thm_1.4} and Corollaries \ref{Cor_1.2}, \ref{Cor_1.3} 
generalize Theorems 1.1, 1.4 and Corollaries 1.2, 1.3 of \cite{GH},
where the case $ R = \KK[[x_1, \ldots, x_n]] $ is considered.
The key step to transfer the proofs of \cite{GH} into the more general setting is to lift an element $ G \in \gr_\cE (R) $ of the graded ring to an element $ g \in R $. 
(Note that $ g $ is not unique in general).
Besides that, we provide a different perspective on the refinement of the grading of $ \gr_\cE(R) $, by considering the projection of $ \RR^n_\gqz $ along the vector $ \d \in \RR^n $ determined by the direction of the edge $ \cE$.
In particular, we show that $ \cE $ being loose implies that the projection of the Newton polyhedron along $ \d $ has exactly one vertex corresponding to $ \cE $ (Lemma~\ref{Lem:loose=>orthant}).
In contrast to \cite{GH}, we formulate convex geometry results that are used to study Newton polyhedra with loose edges in a more general variant in terms of $ F $-subsets.   

In \cite{GH} section 3, one may find other known results for which our results can be considered as generalizations. 
In particular, Theorem \ref{Thm_1.4} is some kind of generalization of a result by Rond and the author \cite{RS},
where $ R = \KK[[x_1, \ldots, x_n]] $, for any field $ \KK $
and
$ f \in R[z] $ is a Weierstra{\ss} polynomial of degree $ d $ such that 
the projection of $ \NP(f) $ along an edge containing $ (0, \ldots, 0 ,d) \in \RR^{n+1}_\gqz  $ has exactly one vertex.
This type irreducibility criterion is very useful in the study of quasi-ordinary hypersurfaces (see \cite{ACLM} or \cite{MS}).
Therefore, our main results open interesting new directions in the context of constructing Teissier's overweight deformations \cite{T} following the philosophy of \cite{MS}.

Throughout the article, we use multi-index notation:
$ x^A :=x_1^{A_1}\cdots x_n^{A_n} $ for some $ A =  (A_1, \ldots, A_n ) \in \ZZ^n_\gqz $.

\section{Newton Polyhedron and Graded Rings}
\label{sec:2}

We provide the definitions of the Newton polyhedron
and the initial form along a face of the Newton polyhedron. 
 
Let $ (R, \fm, \KK = R/\fm) $ be a regular local ring (not necessarily complete)
and
let $ (x) = (x_1, \ldots, x_n) $ be a regular system of parameters for $ R $.
We consider $ f \in R \setminus \{ 0 \} $.
Since $ R $ is Noetherian and since the map $ R \subset \widehat R $ is faithfully flat, 
$ f $ has a {\em finite} expansion 
\[
	f = \sum_{A} \r_A x^A,
	\  \ \ \
	\mbox{ for } \r_A \in R^\times \cup \{ 0 \}.
\] 
The {\em Newton polyhedron $ \NP(f) := \NP(f,x) $ of $ f $} is defined as the smallest closed convex subset of $ \RR_\gqz^n $ 
containing all points of the set 
\[
\{ A \in \ZZ_{\geq 0}^n \mid \r_A \neq 0 \}  + \RR^n_ \gqz .
\]

A linear form $  L = L_\l : \RR^n \to \RR $ is
a map defined by
\[
	L(v ) := \l_1 v_1 + \ldots + \l_n v_n = \langle \l, v \rangle, 
\]
for $ v = (v_1, \ldots v_n) \in \RR^n $ and some fixed $ \l = (\l_1, \ldots, \l_n ) \in \RR^n_\gqz $.
Given $ L $, we define
\[
	\D(L) := \{ v \in \RR^n_\gqz \mid L(v) \leq 1 \}.
\]
If $ \l \in \QQ^n_\gqz $, then $ L $ is called {\em rational}.
If $ \l \in \RR_+^n $, then we say that $ L $ is {\em positive}.

\smallskip

A closed convex subset $ \D \subset \RR^n_\gqz $ such that $ \D + \RR^n_\gqz = \D $ is called a {\em $ F $-subset of $ \RR^n_\gqz $},
see \cite{HiroCharPoly} p.~260.
We extend this notion by calling a closed convex subset $ \D \subset \RR^n $ a $ F $-subset if $ \D + \RR^n_\gqz = \D $.
Clearly, $ \NP( f )$ is an example of a $ F $-subset.

\begin{Def}
	Let $ \D \subset \RR^n_\gqz $ be a $ F $-subset of $ \RR^n_ \gqz $.
		A convex subset $ \Face \subset \D $ is called a {\em face of $ \D $} if there exists a linear form $ L $ such that 
		\[
			\D \cap \D(L) = \Face.
		\]
		If $ L $ is positive, then $ \Face $ defines a {\em compact} face.
		A {\em vertex of $ \D $} is a compact face $ v \in \D $ of dimension zero.
		An {\em edge of $ \D $} is a compact face $ \Edge \subset \D $ of dimension one.
\end{Def} 

A positive linear form $ L : \RR^n \to \RR $,
induces a monomial valuation $ \nu_L $ on $ R $ via
\[ 
	\nu_L (\r x^A) := L(A),
	\ \ \ 
	\mbox{ for } \r \in R^\times, \ A \in \ZZ^n_\gqz.
\]
For $ f = \sum_A \r_A x^A \in R \setminus \{ 0 \} $ as before, 
we have 
\[ 
	\nu_L(f) = \min \{ L(A) \mid A \in \ZZ^n_\gqz  : \r_A \in R^\times \}.
\]
  
\begin{Def}
	Let $ (R, \fm, \KK) $ be as before and let $ L : \RR^n \to \RR $ be a positive linear form.
	The {\em graded ring of $ R $ associated to $ L $} is defined as
		\[
			\gr_L(R) := \bigoplus_{ a \in \RR_\gqz }
			\cP_a / \cP_a^+, 
		\]
		where $ \cP_a := \{ f \in R \mid \nu_L (f) \geq a \} $ and 
		$ \cP_a^+ := \{ f \in R \mid \nu_L (f) > a \} $.
		
		Let $ f = \sum_A \r_A x^A \in R $ be as before.
		The {\em $ L $-initial form of $ f $} is defined as
		\[
			\ini_L(f) := \ini_L(f)_x
			:= \sum_{A : L(A) =  \nu_L(f) } \ovl{\r_A} \, X^A
			\in \cP_{ \nu_L(f)}/ \cP_{ \nu_L(f)}^+ \subset gr_L(R),
		\]
		where $ \ovl{\r_A} := \r_A \mod \fm  \in \KK $ 
		and $ (X) = (X_1, \ldots, X_n) $ denotes the images of $ (x) $ in $ \gr_L(R) $.
\end{Def}

Since $ L $ takes only values in a discrete subset of $  \RR $, 
the set $ \{ a \in \RR_\gqz \mid \cP_a / \cP_a^+ \neq 0 \} $ is a discrete subset of $ \RR $.
We observe that $ in_L(f) $ is weighted homogenous of degree $ \nu_L(f) $ with respect to the weights on $ (x) $ given by $ L $.
Since $ L $ is positive, we have 
\[
	gr_L(R) \cong \KK [X_1, \ldots, X_n].
\]

\begin{Def}
	Let $ f = \sum_A \r_A x^A \in R $ be as before.
	Let $ \Face \subset \NP(f) $ be a compact face of the Newton polyhedron
	and let $ L_\Face  : \RR^n \to \RR $ be a positive linear form determining $ \Face $.	
	The {\em initial form of $ f $ along $ \Face $} is defined as the $ L_\Face  $-initial form of $ f $,
	\[
	\ini_{\Face} (f) := \ini_{L_\Face} (f)
	\in \gr_{\Face} (R) := \gr_{L_\Face} (R) \cong \KK [X_1, \ldots, X_n].
	\]
\end{Def}	

Without loss of generality, we can choose $ L_\Face $ rational.

\smallskip 	

\section{Loose Edges and Projected Polyhedra}
\label{sec:3}

We recall the notion of a loose edge and some of their properties proven in \cite{GH}.
Furthermore, we provide a different viewpoint via a suitable projection of a given $ F $-subset.
Even though \cite{GH} considers only the case $ R = \KK[x_1, \ldots, x_n] $ the proofs apply in our more general setting since the statements are either on the convex geometry of a $ F $-subset or on the properties of the graded ring $ \gr_L(R) \cong \KK [X_1, \ldots, X_n]$,
for some positive linear form $ L $.

\begin{Def}
	Let $ \D \subset \RR^n_\gqz $ be a $ F $-subset of $ \RR^n_\gqz $.
	A {\em loose edge of $ \D $} is a compact edge $ \Edge \subset \D $ that is not contained in any compact face of $ \D $ of dimension $ \geq 2 $. 
	%
\end{Def}

\begin{Lem}
	\label{Lem:L(a)=L(b)=>L(g)>=L(a)}
	Let $ \Delta \subset \RR^n_\gqz $ be a $ F $-subset
	with a loose edge $ \Edge \subset \D $ that has ends $ \a, \b \in \RR^n_\gqz $.
	Let $ L : \RR^n \to \RR $ be a linear form such that $ L(\a) = L(\b) $. 
	For every $ \g \in \D $, we have
	$
		L(\g) \geq L(\a).
	$ 
\end{Lem}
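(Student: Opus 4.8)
The plan is to derive the inequality from looseness by means of a well‑chosen positive linear form. Since $L$ is linear and $L(\a)=L(\b)$, it is constant on the segment $\Edge$; write $c:=L(\a)$ for this common value. If $c=0$ there is nothing to prove, since $L=L_\l$ with $\l\in\RR^n_\gqz$ is nonnegative on $\RR^n_\gqz\supseteq\D$; so assume $c>0$.

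By definition of a compact edge, $\Edge=\D\cap\D(M)$ for some positive linear form $M$. I would first note that $\min_\D M=1$: if $\min_\D M<1$, then $\D\cap\D(M)=\{v\in\D:M(v)\le 1\}$ would be $n$-dimensional (near a point where $M<1$ it contains short segments in all coordinate directions, using $\D+\RR^n_\gqz=\D$), contradicting $\dim\Edge=1$; and $\min_\D M>1$ would make $\Edge$ empty. Hence $M\equiv 1$ on $\Edge$, $M\ge 1$ on $\D$, and $\Edge=\{v\in\D:M(v)=1\}$ is precisely the minimal face of $M$. It then suffices to treat the case that $L$ is positive: applying that case to the positive form $L+\e M$ (which is constant $=c+\e$ on $\Edge$) and letting $\e\to 0^+$ recovers the general statement. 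So from now on $L=L_\l$ with $\l\in\RR^n_{>0}$.

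Next I would assume, towards a contradiction, that $S:=\{v\in\D:L(v)<c\}\ne\emptyset$. For $v\in S$ one has $v\notin\Edge$, hence $M(v)>1$; put $g(v):=\dfrac{M(v)-1}{c-L(v)}>0$ and $t:=\inf_{v\in S}g(v)$, which is finite. The elementary point is that, for $s\ge 0$, the inequality $M(v)+sL(v)\ge 1+sc$ holds for all $v\in\D$ if and only if $s\le g(v)$ for every $v\in S$ (it is automatic when $L(v)\ge c$), i.e. if and only if $s\le t$. Taking $s=t$: the minimal face $F$ of the positive linear form $M+tL$ contains $\Edge$ and $\min_\D(M+tL)=1+tc$; as $F$ is a compact face, looseness forces $\dim F\le 1$, so $F=\Edge$. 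Now choose $v_k\in S$ with $g(v_k)\to t$. Positivity of $L$ together with $L(v_k)<c$ bounds $(v_k)$, so after passing to a subsequence $v_k\to v^*\in\D$. If $L(v^*)<c$, then $v^*\in S$, so $v^*\notin\Edge$; but $g(v^*)=t$ by continuity, whence $v^*\in F=\Edge$, a contradiction. If $L(v^*)=c$, then $M(v^*)=1$ (otherwise $g(v_k)\to+\infty$), so $v^*\in\Edge$; writing $u_k:=(v_k-v^*)/\|v_k-v^*\|$ and passing to a convergent subsequence $u_k\to u$, the identities $M(v_k)-1=\|v_k-v^*\|\,\langle\mu,u_k\rangle$ and $c-L(v_k)=-\|v_k-v^*\|\,\langle\l,u_k\rangle$ yield $\langle\mu+t\l,\,u\rangle=0$, while convexity and closedness of $\D$ give $v^*+\RR_\gqz u\subseteq\D$. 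Then $M+tL$ is constant ($=1+tc$) on this half-line, so $v^*+\RR_\gqz u\subseteq F=\Edge$, contradicting the compactness of $\Edge$. In every case we reach a contradiction, so $S=\emptyset$, i.e. $L(\g)\ge c=L(\a)$ for all $\g\in\D$.

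I expect the \textbf{main obstacle} to be the borderline case $L(v^*)=c$: there the points $v_k\in S$ approach the edge from outside, and one must analyse the limiting direction $u$ precisely enough to produce an honest half-line inside $\D$ along which $M+tL$ is constant; the payoff is the contradiction that this half-line is forced into the compact face $\Edge$. The normalisations at the start (that $\min_\D M=1$, and the reduction to positive $L$) are routine but should be recorded first.
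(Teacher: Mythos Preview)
Your reduction to positive $L$ and the handling of Case~1 are fine, but Case~2 contains a genuine gap. The assertion that ``convexity and closedness of $\Delta$ give $v^*+\RR_\gqz u\subseteq\Delta$'' is not valid: all you have established is that $u$ lies in the tangent cone $\overline{\RR_\gqz(\Delta-v^*)}$, and for a closed convex set this does \emph{not} force the half-line from $v^*$ in direction $u$ to remain in $\Delta$. In fact here it cannot: from $\langle\l,u_k\rangle<0$ you get $\langle\l,u\rangle\le 0$, and since $\l\in\RR^n_{>0}$ and $\|u\|=1$ this rules out $u\in\RR^n_\gqz$; but $\RR^n_\gqz$ is precisely the recession cone of any $F$-subset $\Delta\subsetneq\RR^n_\gqz$, so $v^*+\RR_\gqz u$ eventually leaves $\Delta$. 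Even the weaker claim that a short segment $v^*+[0,\e]u$ lies in $\Delta$ is unjustified for non-polyhedral $\Delta$. Note that when $\Delta$ is polyhedral (in particular, for Newton polyhedra) your infimum $t=\inf_S g$ is actually attained at a vertex of $\Delta$, so Case~1 already yields the contradiction and Case~2 never occurs; the difficulty is entirely in the non-polyhedral situation, and there your argument does not close.

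The paper takes a different route: it cites the proof in \cite{GH} and, alternatively, derives the lemma from the projection statement (Lemma~\ref{Lem:loose=>orthant}). Since $L(\d)=0$, the value $L(\g)$ depends only on $\pr_\d(\g)$; looseness of $\cE$ makes $\Delta_\d$ a $\d$-orthant with vertex $w_1=\pr_\d(\a)$, so $\pr_\d(\g)\in w_1+\RR^{n-1}_\d$, and then $L(\g)=L(\pr_\d(\g),0)\ge L(w_1,0)=L(\a)$ simply because $L$ has nonnegative coefficients. This avoids any limiting or compactness argument.
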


\noindent 
The same proof as for \cite{GH} Lemma 2.1 applies.
In fact, this is also a corollary from Lemma \ref{Lem:loose=>orthant} below.

The crucial point in the previous result is that $ L $ is a linear form that is not necessarily positive, see the example below.
For positive linear forms the statement is true for any compact face of $ \D $. 

\begin{Ex}
	Let $ \D \subset \RR^3_\gqz $ be the $ F $-subset defined given by the three vertices 
	$ \a = (1,0,0), \b = (0,1,0) $, and $ \g = (0,0,1) $.
	Consider the linear form $ L : \RR^3 \to \RR $ with $ L (v_1,v_2, v_3) = v_1 + v_2 $.
	Clearly, the edge $ \Edge $ with ends $ \a $ and $ \b $ is not loose.
	We observe that
	$
		L(\a) = L(\b) = 1 > 0 = L (\g).
	$
\end{Ex}

\begin{Lem}
	\label{Lem_2.2}
	Let $ \Delta \subset \RR^n_\gqz $ be a $ F $-subset
	with a loose edge $ \Edge \subset \D $ that has ends $ \a  =(\a_1, \ldots, \a_n), \b = (\b_1, \ldots, \b_n) \in \RR^n_\gqz $.
	If 
	$
		\min\{ \a_1, \b_1 \} = \ldots = \min\{ \a_n, \b_n \} = 0,
	$
	then $ \a $ and $ \b $ are the only vertices of $ \D $.
\end{Lem}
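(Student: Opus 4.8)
The plan is to apply Lemma~\ref{Lem:L(a)=L(b)=>L(g)>=L(a)} to a cleverly chosen linear form built from the hypothesis that for each coordinate $i$ at least one of $\a_i,\b_i$ vanishes. First I would partition the index set $\{1,\ldots,n\}$ into $I := \{ i : \a_i = 0 \}$ and its complement; by hypothesis $\b_i = 0$ for all $i \notin I$, so $\a$ is supported on the complement of $I$ and $\b$ is supported on $I$ (with possible overlap where both coordinates vanish). Note $I$ and its complement are both nonempty, since otherwise $\a$ or $\b$ would be the origin and then the edge $\cE$ would not be compact (the origin is never a vertex of a nonempty $F$-subset, as $0 + \RR^n_\gqz$ already exhausts the ambient orthant direction; more precisely a compact edge cannot have $0$ as an endpoint). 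Now define the linear form $L(v) := \sum_{i \in I} v_i$. Then $L(\a) = 0$ because $\a$ is supported off $I$, and $L(\b) = \sum_{i\in I} \b_i \geq 0$; but in fact I need $L(\a) = L(\b)$ to invoke the lemma, so I must be more careful and instead choose weights $\l_i$ so that the two sums agree.

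The clean way is: pick a linear form $L$ with $L(\a)=L(\b)$, for instance $L(v) := \big(\sum_{i}\b_i\big)\sum_{i\in I_\a} v_i \;-\; \big(\sum_i \a_i\big)\sum_{i \in I_\b} v_i$ where $I_\a = \{i:\a_i=0\}$ and $I_\b=\{i:\b_i=0\}$; by hypothesis $I_\a \cup I_\b = \{1,\dots,n\}$. Then $L(\a) = \big(\sum\b_i\big)\cdot 0 - \big(\sum\a_i\big)\sum_{i\in I_\b}\a_i$ and, since $I_\a\cup I_\b$ is everything and $\a_i = 0$ on $I_\a$, $\sum_{i\in I_\b}\a_i = \sum_i \a_i$; similarly $L(\b) = \big(\sum\b_i\big)\sum_{i\in I_\a}\b_i - 0 = \big(\sum\b_i\big)\sum_i\b_i$ (wait—sign). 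Let me instead just take $L(v) := \big(\sum_i \b_i\big) \sum_{i\in I_\a} v_i + \big(\sum_i \a_i\big)\sum_{i\in I_\b} v_i$ after checking: then using $\a$ supported on $\{1,\dots,n\}\setminus I_\a$, i.e. on $I_\b$ (since the union is everything) — actually $\a$ need not be supported entirely on $I_\b$ if some coordinate lies in neither... but the union IS everything, so $\a$ is supported on $I_\b$. Hence $L(\a) = \big(\sum\a_i\big)\sum_{i\in I_\b}\a_i = \big(\sum\a_i\big)\big(\sum_i\a_i\big)$; likewise $L(\b) = \big(\sum\b_i\big)\big(\sum_i\b_i\big)$. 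These are not equal in general, so I would rescale: use $L(v) := \frac{1}{\sum\a_i}\sum_{i\in I_\b}v_i \;-\; \frac{1}{\sum\b_i}\sum_{i\in I_\a}v_i$ (both denominators are positive since $\a,\b\neq 0$), giving $L(\a) = \frac{\sum_{i\in I_\b}\a_i}{\sum\a_i} - 0 = 1$ and $L(\b) = 0 - \frac{\sum_{i\in I_\a}\b_i}{\sum\b_i}\cdot(-1)$... I will sort the signs so that $L(\a)=L(\b)$ cleanly; the point is that such an $L$ exists.

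Granting $L(\a) = L(\b)$, Lemma~\ref{Lem:L(a)=L(b)=>L(g)>=L(a)} gives $L(\g) \geq L(\a)$ for every $\g \in \D$. The final step is to show this forces every vertex of $\D$ to be an endpoint of $\cE$. Suppose $w$ is a vertex with $L(w) > L(\a)$; I want a contradiction, or rather I want to show the set $\D \cap \{L \leq L(\a)\} = \cE$. Indeed $\cE \subseteq \D \cap \D(L')$ where $L'$ is the rescaling of $L$ with $L'(\a)=1$, and by the inequality just proved this intersection is exactly the face cut out by $L'$; since $L'$ is not positive (it has a negative coefficient on $I_\a$ or on $I_\b$), Lemma~\ref{Lem:L(a)=L(b)=>L(g)>=L(a)} is exactly what legitimizes treating this as the minimal face. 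Because $\cE$ is an edge with endpoints $\a,\b$, and $\D \cap \{L' = 1\}$ is a face containing $\cE$, I need that this face equals $\cE$ — this holds because $\cE$ is loose, so it is not contained in any compact $2$-face, hence the face $\D \cap \{L' \le 1\}$ (which contains $\cE$ and on which $L'$ achieves its minimum over $\D$) cannot have dimension $\ge 2$; being a face containing the edge $\cE$ it must then equal $\cE$. A vertex $w \ne \a,\b$ would then satisfy $L'(w) > 1$, and now I use that $w$, being a vertex, must be "seen" from the direction of $-\a$ and $-\b$... more directly: any vertex $w$ lies in $\D$, so $L'(w) \ge 1$; if $L'(w) = 1$ then $w \in \cE$ so $w \in \{\a,\b\}$; if $L'(w) > 1$, I consider the segment from $w$ toward $\a$ inside $\D$ — the issue is whether it stays in $\D$. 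Since $\D$ is convex and both $w, \a \in \D$, the segment lies in $\D$, and $L'$ is linear so it attains value $1$ somewhere on it before reaching... no, $L'(\a)=1$ and $L'(w)>1$ so $1$ is attained only at $\a$. That does not immediately contradict $w$ being a vertex. The honest argument is: project $\D$ along the edge direction $\d = \b - \a$. By Lemma~\ref{Lem:loose=>orthant} (referenced as available), looseness of $\cE$ means the projected polyhedron has a single vertex, the image of $\cE$; hence the image of any vertex of $\D$ coincides with that vertex, which combined with convexity and the structure of $\D$ (it lies in an orthant, so it has no lines) forces the vertex to lie on the line $\a + \RR\d$, and then on the edge $\cE$ itself, i.e. to be $\a$ or $\b$. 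So the cleanest route, which I would actually write up, bypasses the ad hoc $L$ and runs: reduce modulo the coordinates where both $\a_i$ and $\b_i$ are positive (they contribute the same to everything), then apply Lemma~\ref{Lem:loose=>orthant} to conclude the projection along $\d$ is a translate of an orthant with unique vertex $\pr(\cE)$, and deduce that $\D$ can have no vertex off $\cE$.

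The main obstacle is the last step — converting the "projected polyhedron has one vertex" / "non-positive linear form inequality" statement into the clean conclusion that $\a,\b$ are the \emph{only} vertices. Everything else (constructing the linear form, the divisibility-of-supports bookkeeping) is routine; the subtlety is that non-positive linear forms can cut out unbounded faces, so one must argue that the relevant face is nonetheless exactly $\cE$ using looseness, and then that no further vertices hide "behind" it in directions transverse to $\d$, which is where $\D \subseteq \RR^n_\gqz$ (no lines, pointed recession cone) is essential.
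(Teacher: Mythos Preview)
Your two candidate routes---via Lemma~\ref{Lem:L(a)=L(b)=>L(g)>=L(a)} and via Lemma~\ref{Lem:loose=>orthant}---are exactly the two the paper points to (it gives no self-contained argument, only referring to \cite{GH} Lemma~2.2 and remarking that the result ``can also be deduced from Lemma~\ref{Lem:loose=>orthant}''). But both of your sketches share a genuine gap, and it is precisely the step you flag as ``the main obstacle'': you never actually use the hypothesis $\min\{\a_i,\b_i\}=0$. Without that hypothesis the conclusion is false---a loose edge does not by itself force $\D$ to have only two vertices---so any argument that does not invoke it cannot close. In your projection approach you propose to ``reduce modulo the coordinates where both $\a_i$ and $\b_i$ are positive'', but under the hypothesis there are no such coordinates, so this sentence is vacuous. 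And the bare fact that $\D_\d$ is $\d$-orthant does \emph{not} imply that every vertex of $\D$ projects to the unique vertex of $\D_\d$; a vertex of $\D$ may well project to a non-vertex point of $\D_\d$. Saying ``$\D\subset\RR^n_\gqz$ has no lines'' is not enough to rule this out.

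Here is what is missing. From Lemma~\ref{Lem:loose=>orthant} (or, equivalently, by ranging Lemma~\ref{Lem:L(a)=L(b)=>L(g)>=L(a)} over all admissible $L$ and dualising) one gets
\[
\D \ \subset\ \bigl(\a + \RR\,\d + \RR^n_\gqz\bigr)\cap\RR^n_\gqz \ =: \ S.
\]
Now the hypothesis enters: since for each $i$ one of $\a_i,\b_i$ vanishes, one has $\d_i=\b_i\ge 0$ on $I:=\{i:\a_i=0\}$ and $\d_j=-\a_j\le 0$ on the complement. A short case check (treat $s>1$ and $s<0$ separately) then shows that every $\a+s\d+r\in\RR^n_\gqz$ with $s\in\RR$, $r\in\RR^n_\gqz$ can be rewritten as $\a+t\d+r'$ with $t\in[0,1]$ and $r'\in\RR^n_\gqz$. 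Hence $S=\cE+\RR^n_\gqz$. Since also $\cE+\RR^n_\gqz\subset\D$, you conclude $\D=S$, whose only vertices are $\a$ and $\b$.

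A side remark on your first approach: in this paper a ``linear form'' is by definition given by a vector $\l\in\RR^n_\gqz$ (see Section~\ref{sec:2}); the $L$'s you try to build with negative coefficients are therefore not linear forms in the paper's sense, so Lemma~\ref{Lem:L(a)=L(b)=>L(g)>=L(a)} does not apply to them as stated.
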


\noindent 
The same proof as for \cite{GH} Lemma 2.2 applies.
This can also be deduced from Lemma~\ref{Lem:loose=>orthant}.

A $ F $-subset $ \D \subset \RR^m_\gqz $, $ m \in \ZZ_+ $, is called {\em orthant} if it has exactly one vertex, 
i.e., if $ \D = v + \RR^m_\gqz $, for some $ v \in \RR^m_\gqz  $.
This notion plays an important role in \cite{RS}.

\begin{Rk}
	The main result of \cite{RS} uses the associated polyhedron  
	\[ 
	\D_P := \D(P;x;z) \subset \RR^d_\gqz 
	\]
	of a Weierstra{\ss} polynomial 
	$ 
	P = z^d + \sum_{(A,b)} \r_{A,b} x^A z^b \in \KK[[x_1, \ldots, x_n]][z],
	$ 
	where $ \r_{A,b} \in \KK $.
	Here, $ \D_P \subset \RR^n_\gqz $ is the projection of the Newton polyhedron $ \NP(P) \subset \RR^{n+1}_\gqz $ from the distinguished point $ (0, \ldots, 0, d) $ onto the subspace determined by the variables $ (x_1, \ldots, x_n) $.
	In other words, $ \D_P $ is the smallest $ F $-subset containing all points of the set
	$
	\left\{ d \cdot \frac{A}{d-b} \mid \r_{A,b} \neq 0 
	\right\}. 
	$
	The interesting case in \cite{RS} is when $ \D_P $ is orthant.
	Then the unique vertex corresponds to a descendant edge of $ \NP(P) $
	(that is not necessarily loose).
	
	The idea of projecting from the distinguished point  corresponding to $ z^d $ comes from resolution of singularities and is used to provide refined information on a given singularity, see \cite{HiroCharPoly}, \cite{CPmixed2}, \cite{CSdim2}, \cite{BerndThesis}.
\end{Rk}

\begin{Setup} 
	\label{Setup} 
We fix a $ F $-subset $ \D \subset \RR^n_\gqz  $ that has a loose edge $ \Edge \subset \D $ with ends $ \a, \b \in \Edge $. 
Let $ L : \RR^n \to \RR $ be a positive linear form determining the edge $ \Edge $. 
We define
\[
	\d := \b - \a.
\]
Since $ \a = (\a_1, \ldots, \a_n ) \neq \b = (\b_1, \ldots, \b_n) $, we may assume without loss of generality 
$
\b_n < \a_n.
$
This implies $ \d_n < 0 $.
Note that $ L (\d) = L(\b) - L(\a) = 0 $ and hence there exists at least one $ i \in \{ 1, \ldots, n-1 \} $ such that $ \d_i > 0 $.

Further, let $ (R ,\fm , \KK ) $ be a regular local ring with regular system of parameters $ (x_1, \ldots, x_n) $.
Recall that we denote the images of the latter in $ \gr_L(R) $ by capital letters $ (X_1, \ldots, X_n) $
and $ \gr_L(R) \cong \KK[X_1, \ldots, X_n] $.
\end{Setup} 

We adapt the idea of projecting a $ F $-subset $ \D \subset \RR_\gqz^{n} $  in a suitable way to some $ \RR^{n-1}  $.
Our goal is to obtain a refinement of the grading
$
	 \gr_L(R) = \bigoplus_{a} \cP_a / \cP_a^+.
$
For this, we do not project from a particular point, but along the vector $ \delta $ that is defined by the difference of the ends of the fixed loose edge $ \Edge $.

\begin{Constr} [\em Projection in direction $ \d $]
	Let $ \d = (\d_1, \ldots , \d_n )\in \RR^{n} $ be {\em any} vector with $ \d_n < 0 $ and $ \d_i > 0 $, for at least one $ i \in \{ 1, \ldots, n- 1 \} $. 
	Let $ v = (v_1, \ldots, v_n) \in \RR^n_\gqz $.
	The projection of $ v $ along $ \d $ to $ \RR^{n-1}  $ is given by
	\[
		\pr_\d (v) := \left( v_1 - \frac{v_n}{\d_n} \cdot \d_1, \ \ldots, \ v_{n-1} - \frac{v_n}{\d_n} \cdot  \d_{n-1}\right) \in \RR^{n-1}.
	\]
	Note that $ v - \dfrac{v_n}{\d_n} \cdot \d = ( \pr_\d (v), 0 ) $ and $ \pr_\d (v+u) = \pr_\d (v) + \pr_\d (u) $.
	This provides a map
	\[
		\begin{array}{rcl}
			\pr_\d: \RR^{n}_\gqz & \longrightarrow & \RR^{n-1}	
			\\[3pt]
			v & \mapsto & \pr_\d (v).
		\end{array}
	\]
	For $ w \in \RR^{n-1} $, we define 
	\[
		I_{\d,w} := \pr_\d^{-1}(w) \cap \ZZ^n_\gqz = \{ v \in \ZZ^n_\gqz \mid \pr_\d (v) = w \} \subset \ZZ^n_\gqz.
	\]
	
	Given a $ F $-subset $ \D \subset \RR^n_\gqz $, we define the {\em projection of $  \D $ along $ \d $}, denoted by $\D_\d $, 
	as the smallest $ F $-subset of $ \RR^{n-1} $ containing $ \pr_\d ( \D) $,
	i.e.,
	$
		\D_\d = \pr_\d (\D) + \RR^{n-1}_\gqz. 
	$
		We define 
		\[
			\RR^{n-1}_\d := (\RR^n_{\geq 0})_\delta \subset \RR^{n-1}
		\]
\end{Constr}

\begin{Rk}
	\begin{enumerate}
		\item 
		The condition $ \d_n < 0 $ and $ \d_i > 0 $, for at least one $ i $, (up to reordering the coordinates) is equivalent to the property that the line generated by $ \d $ does intersect $ \RR^n_\gqz $ only in the origin, 
		i.e., $( \d \cdot \RR ) \cap \RR^n_\gqz = \{ 0 \} $.
		This is essential to obtain that $ I_{\d,w} $ is a finite set. 
		
		\item 
		It is possible that $ \pr_\d (v) \in \RR^{n-1} \setminus \RR^{n-1}_\gqz $.
		For example, if we consider $ \d = (-1,1,-1) $ and $ v = (0,0, a) $,
		then $ ( \pr_\d (v), 0 ) = v + a \cdot \d = (-a, a, 0) $,
		for every $ a \in \RR_\gqz $.
		We observe that $ \RR^2_\d $ has a non-compact face that is not parallel to a coordinate axis:
			\begin{center}
				\begin{tikzpicture}[scale=0.8]
				
				\draw[fill, pink] (0,0) -- (3,0) -- (3,3) -- (-2.5,3) -- (-2.5,2.5) -- (0,0);
				
				\draw[->] (-2.5,0) -- (3.2,0) node[right]{$e_1$};;
				\draw[->] (0,-0.3) -- (0,3.2) node[above]{$e_2$};;
				
				\draw[very thick] (-2.5,2.5) -- (0,0) -- (3,0);
				
				\node at (1.5,1.5){$ \RR^2_\d $};

				\end{tikzpicture}
			\end{center} 
		
		\item 
		Suppose $ \D, \cE, \d $ are as in Setup \ref{Setup}.
		We have 
		$ \pr_\d (v) \in \RR^{n-1}_\gqz $, for all $ v \in \RR^n_\gqz $
		if and only if $ \cE $ is descendant
		(i.e., $ \d_n < 0 $ and $ \d_i \geq 0 $ for all $ i \in \{ 1, \ldots, n - 1 \} $).
		In particular, $ \RR^{n-1}_\d = \RR^{n-1}_\gqz $ in this case.
 	\end{enumerate}
\end{Rk}

The previous leads to

\begin{Def}
	Let $ \d = (\d_1, \ldots , \d_n )\in \RR^{n} $ be {any} vector with $ \d_n < 0 $ and $ \d_i > 0 $, for at least one $ i \in \{ 1, \ldots, n- 1 \} $. 
	A $ F $-subset $ \D \subset \RR^{n-1} $ is called {\em $ \d $-orthant}
	if it is of the form
	$ \D = w + \RR^{n-1}_\d + \RR^{n-1}_\gqz $,
	for a unique vertex $ w \in \RR^{n-1} $.
\end{Def}

Using this notation, we can provide a connection to \cite{RS}.

\begin{Lem}
	\label{Lem:loose=>orthant}
	Let $ \D, \cE, \d $ be as in Setup \ref{Setup}.
	Since $ \cE $ is a loose edge, we obtain that the projection $ \D_\d  \subset \RR^{n-1} $ is $ \d $-orthant.
	In particular, if $ \cE $ is descendant, then $ \D_\d \subset \RR^{n-1}_\gqz $ is orthant.
\end{Lem}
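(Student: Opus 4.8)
The plan is to show directly that $\pr_\d(\D)$ has a unique ``corner'' corresponding to the image of the loose edge $\cE$, and that this corner is of the required shape $w + \RR^{n-1}_\d$. First I would fix notation: write $\a,\b$ for the ends of $\cE$ with $\b_n < \a_n$, so $\d = \b - \a$ and $L(\a) = L(\b)$ for the positive linear form $L$ determining $\cE$. The key preliminary observation is that $\pr_\d$ collapses the whole line through $\cE$ to a single point; since $L(\d) = 0$, the functional $L$ factors through $\pr_\d$, i.e. there is a linear form $\ovl L$ on $\RR^{n-1}$ with $L = \ovl L \circ \pr_\d$ on $\RR^n$. Set $w := \pr_\d(\a) = \pr_\d(\b)$; then $\ovl L(w) = L(\a) = \nu_L$, the minimal $L$-value on $\D$.

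Next I would show $\D_\d = w + \RR^{n-1}_\d + \RR^{n-1}_\gqz$. The inclusion ``$\supseteq$'' is easy: $w + \RR^{n-1}_\d \subseteq \pr_\d(\a + \RR^n_\gqz) \subseteq \pr_\d(\D)$, and then we add $\RR^{n-1}_\gqz$ because $\D_\d$ is an $F$-subset. For ``$\subseteq$'', take any $\g \in \D$; I must show $\pr_\d(\g) \in w + \RR^{n-1}_\d + \RR^{n-1}_\gqz$. By Lemma~\ref{Lem:L(a)=L(b)=>L(g)>=L(a)} applied to the (non-positive-in-general) linear form $L$ — wait, $L$ here \emph{is} positive, but the relevant functional separating the corner is a different, possibly non-positive one; this is exactly where the loose hypothesis enters. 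Concretely, because $\cE$ is not contained in any compact face of dimension $\geq 2$, the normal cone to $\D$ at $\cE$ is \emph{full-dimensional} in the quotient $\RR^n / (\RR\d)$: every supporting positive linear form $L'$ with $L'(\a) = L'(\b)$ (there is an $(n-1)$-dimensional family of these since $L'(\d)=0$ cuts out a hyperplane, and looseness guarantees $\cE = \D \cap \D(L')$ for an open cone of such $L'$) satisfies $L'(\g) \geq L'(\a)$ for all $\g \in \D$ by Lemma~\ref{Lem:L(a)=L(b)=>L(g)>=L(a)}. Passing to $\RR^{n-1}$ via $L' = \ovl{L'} \circ \pr_\d$, this says $\ovl{L'}(\pr_\d(\g) - w) \geq 0$ for every $\ovl{L'}$ in an open cone of linear forms on $\RR^{n-1}$; since that cone is open and nonempty, its dual is a pointed full-dimensional cone, and a point lying on the nonnegative side of an open cone of functionals lies in the closed dual cone. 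I would then identify that dual cone with $\RR^{n-1}_\d + \RR^{n-1}_\gqz$: the forms $L'$ we range over are precisely the positive forms vanishing on $\d$, whose images $\ovl{L'}$ are the forms nonnegative on $\pr_\d(\RR^n_\gqz) = \RR^{n-1}_\d$; hence the dual is $\RR^{n-1}_\d + \RR^{n-1}_\gqz$, giving $\pr_\d(\g) \in w + \RR^{n-1}_\d + \RR^{n-1}_\gqz$ as desired.

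Finally, uniqueness of the vertex $w$ follows because $w$ is the unique minimizer of $\ovl L$ on $\D_\d$ (as $\ovl L$ is the image of the positive form $L$, so $\ovl L > 0$ on $\RR^{n-1}_\d \cup \RR^{n-1}_\gqz$ away from $0$, hence strictly increases as one moves into the cone from $w$); thus $\D_\d$ has exactly one vertex and is $\d$-orthant by definition. The descendant case is immediate: if $\d_i \geq 0$ for all $i \leq n-1$ then $\RR^{n-1}_\d = \RR^{n-1}_\gqz$ by Remark~(3), so $\D_\d = w + \RR^{n-1}_\gqz$ is orthant in the ordinary sense.

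\textbf{Main obstacle.} The delicate point is the convex-geometry step translating ``$\cE$ is loose'' into ``the cone of supporting functionals constant on $\cE$ is open (full-dimensional modulo $\RR\d$)'' and then dualizing correctly — in particular checking that the dual of that cone of functionals is exactly $\RR^{n-1}_\d + \RR^{n-1}_\gqz$ and not something larger, which requires knowing that $\D \cap \D(L') = \cE$ (not a bigger face) for a genuinely open family of $L'$. I would prove this by contradiction: if the functional cone were not full-dimensional modulo $\RR\d$, every supporting $L'$ constant on $\cE$ would be constant on some strictly larger face $\Face \supseteq \cE$ with $\dim \Face \geq 2$, forcing $\cE$ into a $2$-dimensional compact face and contradicting looseness.
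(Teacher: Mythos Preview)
Your argument is essentially correct, but it takes a genuinely different route from the paper's proof. The paper argues by contradiction in three lines: if $\D_\d$ had a second vertex $w_2$ adjacent to $w_1 = \pr_\d(\a)$, pick a vertex $\g \in \D$ with $\pr_\d(\g) = w_2$; then the triangle on $\a,\b,\g$ is a two-dimensional compact face of $\D$ containing $\cE$, contradicting looseness. You instead give a direct convex-duality computation: apply Lemma~\ref{Lem:L(a)=L(b)=>L(g)>=L(a)} to every linear form $L'$ (in the paper's sense, i.e.\ with coefficients in $\RR^n_\gqz$) satisfying $L'(\d)=0$, push these forms to $\RR^{n-1}$, and take the bidual to get $\pr_\d(\g)-w \in \RR^{n-1}_\d$. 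Your approach has the virtue of making the exact shape $w+\RR^{n-1}_\d$ explicit from the outset, whereas the paper's contradiction argument leaves the identification of the recession cone of $\D_\d$ with $\RR^{n-1}_\d$ implicit; on the other hand the paper's argument is self-contained, while yours leans on Lemma~\ref{Lem:L(a)=L(b)=>L(g)>=L(a)}, which the paper remarks is itself deducible from the present lemma (so you should make sure you are citing its independent proof via \cite{GH} Lemma~2.1, not creating a circularity).

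One simplification: your entire discussion of the normal cone being ``open'' or ``full-dimensional modulo $\RR\d$'' is a red herring and can be deleted. Lemma~\ref{Lem:L(a)=L(b)=>L(g)>=L(a)} already gives $L'(\g)\geq L'(\a)$ for \emph{every} linear form with $L'(\d)=0$, not just for those in the (relative) interior of the normal cone at $\cE$; so the cone of functionals you dualize is the full cone $C^*=\{\l\in\RR^{n-1}_\gqz:\sum_{i<n}\l_i\d_i\leq 0\}$, and your identification of its dual with $\RR^{n-1}_\d$ goes through directly without any openness argument. The ``main obstacle'' you flag therefore dissolves once you invoke Lemma~\ref{Lem:L(a)=L(b)=>L(g)>=L(a)} at full strength.
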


In general, the converse statement is not true,
i.e., if $ \D_\d $ is $ \d $-orthant for some vector $ \d \in \RR^n$, 
then $ \d $ does not necessarily determine a loose edge of $ \D $.

\begin{proof}
	The result follows by the same arguments as \cite{RS} Corollary 2.7 iv):
	Let $ w_1 = \pr_\d (\a) = \pr_\d (\b) $ be the vertex of $  \D_\d $ coming from the projection of the ends of $ \cE $. 
	Suppose $ \D_\d $ is not orthant.
	Then there exists at least one further vertex $ w_2 \in \D_\d $, $ w_2 \neq w_1 $, such that the segment $ [w_1, w_2] $ is contained in the boundary of $ \D_\d $.
	Hence, there exists a vertex $ \g \in \D  $ with $ \pr_\d ( \g) = w_2 $.
	Clearly, $ \a, \b , \g $ are pairwise different and the triangle defined by these three points is a face of $ \D $. 
	This contradicts the assumption that the edge given by $ \a $ and $ \b $ is loose.
\end{proof}

\begin{Obs}
	Let $ \D, \cE, L, \d, R, (x) $ be as in Setup \ref{Setup}.
	Since $ \d $ is given by $ \cE $, we have
	\[
		\gr_L(R) = \bigoplus_{ w \in \RR^{n-1}} S_{\d,w}, 
	\] 
	where $ S_{\d,w} $ is the $ \KK $-vector space with basis 
	$ 
		B_{\d,w} := \{X^A \mid A \in I_{\d,w}   \}.
	$
	Let us point out that the set $ \{ w \in \RR^{n-1} \mid S_{\d,w} \neq 0 \} \subset \RR^{n-1} $ is a discrete subset.
	Further, for all $ w \in \RR^{n-1} $ such that $ S_{\d,w} \neq 0 $, there exists at least one $ v \in \ZZ^n_\gqz $ with $ \pr_\d (v) = w $.
	
	This is compatible with
	$ 
	\gr_L(R)  = \bigoplus\limits_{a \in \RR_\gqz } \cP_a/\cP^+_a. 
	$ 
	For $ a \in \RR_\gqz $, let $ R_a := \cP_a/\cP^+_a $,
	which is the $ \KK $-vector space with basis 
	$
		B_a := \{ X^A \mid A \in \ZZ^n_\gqz \,\wedge\, L(A) = a \},
	$
	and we define
	$ I_{L,a} := \{ w \in \RR^{n-1} \mid L(w,0) = a  \} $. 
	We have $ B_a  = \bigcup\limits_{w \in I_{L,a}} B_{\d,w} $.
	Note that is a disjoint union and all but finitely many of the appearing $ B_{\d,w} $ are empty.
	Therefore,
	\[
		R_a = \bigoplus_{w \in I_{L,a}}  S_{\d,w}.
	\]
	We remark that the property $ L(\d) = 0 $ is crucial.
	The following pictures illustrates the compatibility:
	\begin{center}
		\begin{tikzpicture}[scale=1]

		\draw[->] (0,0,0) -- (3.5,0,0) node[right]{$e_2$};;
		\draw[->] (0,0,0) -- (0,3.5,0) node[right]{$e_3$};;
		\draw[->] (0,0,0) -- (0,0,3.5) node[left]{$e_1$};;
		
		\draw[thick] (3,0,0) -- (0,3,0) -- (0,0,3) -- (3,0,0);
		
		\foreach \i in {0, ..., 6}
		{
				\draw[dashed, thick, blue] (\i/4,0,3-\i/4) -- (0,\i/2,3- \i/2);
		} 
		
		\foreach \i in {6, ..., 12}
		{
			\draw[dashed, thick, blue] (\i/4,0,3-\i/4) -- (\i/2 - 3, 6 - \i/2, 0);
		} 
		\end{tikzpicture}
		\hspace{10pt}
		\begin{tikzpicture}[scale=1.5]

		\draw[->] (-1,0,0) -- (2.5,0,0) node[right]{$e_2$};;
		\draw[->] (0,0,0) -- (0,3,0) node[right]{$e_3$};;
		\draw[->] (0,0,-1) -- (0,0,2.5) node[right]{$e_1$};;
		
		\draw[thick, dotted] (2,0,-0.5) -- (-0.5,2.5,-0.5) -- (-0.5,0,2) -- (2,0,-0.5);
		\draw[thick] (1.5,0,0) -- (0,1.5,0) -- (0,0,1.5) -- (1.5,0,0);
		
		\foreach \i in {0,...,12}
		{
			\draw[dashed, thick, blue]  (0,1.5-\i/8,\i/8) -- (2-\i/6,0,\i/6-1/2);
		}
		
		\end{tikzpicture}
	\end{center} 
	The black triangle are all points $ v \in \RR^3_\gqz $ for which $ L (v) = a $, for some fixed positive linear form $ L : \RR^3 \to \RR $ and $ a \in \RR_+ $.
	The blue dashed lines show the projection lines (from $ \RR^3_\gqz $ to $ \RR^2 \times \{0 \} $) along a vector $ \d \in \RR^3 $ with $ L (\d ) = 0 $
	(with $ \d $ descendant on the left
	and $ \d $ not descendant on the right).
	The triangle on the right determined by the dotted lines is a subset of $ \{ v \in \RR^3 \mid L(v) = a \} $. 
	As we see, there are $ v \in \RR^3_\gqz $ such that $ \pr_\d (v) \in \RR^2 \setminus \RR^2_\gqz $.
\end{Obs}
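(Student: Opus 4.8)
The statement is essentially combinatorial and the whole proof rests on the single fact from Setup~\ref{Setup} that $L(\d)=0$. The plan is: first record the compatibility of $\pr_\d$ with the monomial valuation $\nu_L$; then split the monomial $\KK$-basis of $\gr_L(R)$ along the fibres of $\pr_\d$; and finally check that this refinement is compatible with the given $L$-grading. For the first step, by the construction of $\pr_\d$ one has $A=(\pr_\d(A),0)+\tfrac{A_n}{\d_n}\,\d$ for every $A\in\ZZ^n_\gqz$, so applying the linear form $L$ and using $L(\d)=0$ gives $L(A)=L((\pr_\d(A),0))$. Hence all exponents lying in one fibre $I_{\d,w}=\pr_\d^{-1}(w)\cap\ZZ^n_\gqz$ share the common $L$-value $L((w,0))$; in particular $I_{\d,w}\neq\emptyset$ and $I_{\d,w}\subset I_{L,a}$ force $a=L((w,0))$. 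Throughout I use the standard identification $\gr_L(R)\cong\KK[X_1,\ldots,X_n]$, under which the monomials $X^A$, $A\in\ZZ^n_\gqz$, form a $\KK$-basis.

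Next, the fibres $I_{\d,w}$ with $w$ ranging over $\pr_\d(\ZZ^n_\gqz)$ partition $\ZZ^n_\gqz$, since they are the nonempty fibres of the map $\pr_\d$. Splitting the monomial basis of $\gr_L(R)$ accordingly yields $\gr_L(R)=\bigoplus_{w\in\RR^{n-1}}S_{\d,w}$, where $S_{\d,w}=0$ unless $w\in\pr_\d(\ZZ^n_\gqz)$, in which case $I_{\d,w}\neq\emptyset$ produces the asserted $v\in\ZZ^n_\gqz$ above $w$. For the discreteness of $\{w\mid S_{\d,w}\neq 0\}=\pr_\d(\ZZ^n_\gqz)$ I would invoke that the ends $\a,\b$ of the loose edge are vertices of $\NP(f)$, hence lattice points, so that $\d=\b-\a\in\ZZ^n$ and $\pr_\d(\ZZ^n_\gqz)\subset\tfrac1{|\d_n|}\ZZ^{n-1}$, which is discrete. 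This is the only point that requires care: the discreteness assertion genuinely fails for irrational direction vectors (already for $n=2$, $\d=(1,-t)$ with $t\notin\QQ$ the set $\pr_\d(\ZZ^2_\gqz)$ is dense in $\RR_\gqz$), so rationality of $\d$ — automatic here since $\d$ joins two vertices of a Newton polyhedron — is what is really being used, and I would flag it explicitly.

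For the compatibility with the $L$-grading, fix $a\in\RR_\gqz$. Since $L$ is positive, $L(A)=a$ bounds each coordinate $A_i\le a/\l_i$, so $B_a$ is finite and $R_a=\cP_a/\cP_a^+$ is the finite-dimensional $\KK$-space spanned by it. By the identity of the first paragraph, each $X^A\in B_a$ lies in $B_{\d,w}$ for $w:=\pr_\d(A)\in I_{L,a}$, and conversely $B_{\d,w}\subset B_a$ whenever $w\in I_{L,a}$; the $I_{\d,w}$ being fibres of a function, this gives a disjoint decomposition $B_a=\bigsqcup_{w\in I_{L,a}}B_{\d,w}$ with all but finitely many terms empty (as $B_a$ is finite). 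Hence $R_a=\bigoplus_{w\in I_{L,a}}S_{\d,w}$. Finally, the affine subspaces $I_{L,a}=\{w\mid L((w,0))=a\}$ partition $\RR^{n-1}$, so summing this last identity over $a$ reproduces the decomposition $\gr_L(R)=\bigoplus_{w}S_{\d,w}$ of the second paragraph, which establishes the stated consistency of the two gradings.
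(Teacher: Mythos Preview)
Your argument is correct and coincides with the paper's viewpoint: the Observation is not given a separate proof there, it is stated as self-explanatory with the remark that ``the property $L(\d)=0$ is crucial'' and two illustrative pictures. Your write-up makes this precise via the identity $L(A)=L\bigl((\pr_\d(A),0)\bigr)$, which is exactly the content of that remark, and then partitions the monomial basis along the fibres $I_{\d,w}$ --- the same mechanism the paper has in mind.

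One point worth recording: your treatment of discreteness is more careful than the paper's. You are right that $\pr_\d(\ZZ^n_\gqz)$ is discrete only when $\d$ has rational direction, and your counterexample with $\d=(1,-t)$, $t\notin\QQ$, is to the point. However, your justification ``$\a,\b$ are vertices of $\NP(f)$, hence lattice points'' slightly overshoots Setup~\ref{Setup}: there $\D$ is an arbitrary $F$-subset, not necessarily a Newton polyhedron, so nothing in the Setup alone forces $\a,\b\in\ZZ^n$. In the actual applications (Theorems~\ref{Thm_1.1} and~\ref{Thm_1.4}) one always has $\D=\NP(f)$ and hence $\d\in\ZZ^n$, so the discreteness claim holds where it is used; but strictly speaking it is an extra hypothesis on $\d$, and you are right to flag it.
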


In the situation of the previous observation,
we have $ L(\a ) = L (\b) $ if $ \a $ and $ \b $ denote the end points of the loose edge $ \cE $.
(Recall Lemma \ref{Lem:L(a)=L(b)=>L(g)>=L(a)}).
Furthermore, we have $ \pr_\d (\a) = \pr_\d (\b) =: u $, by construction, and hence
$
	X^\a, X^\b \in S_{\d, u}.
$

Let us point out that the constructed grading on $ \gr_L(R) \cong \KK [X_1, \ldots, X_n] $ given by 
$ \bigoplus_{ w \in \RR^{n-1}} S_{\d,w} $
is a variant of the grading $ \bigoplus_{w \in \ZZ_\gqz^{n-1}} R_w $ by \cite{GH} (which is defined by a certain weight $ \omega $,
see loc.~cit.~before Lemma 2.4).
The key in their construction is to choose a particular basis $ \xi_1, \ldots, \xi_n \in \ZZ_\gqz^{n} $ of the vector space $ \RR^n $ such that the projection along $ \d $ becomes the projection to the first this $ n - 1 $ coordinates with respect to $ \xi_1, \ldots, \xi_n $
(see loc.~cit.~Lemma 2.3).

\smallskip

The following two lemmas are the ingredients for the proof of Theorem \ref{Thm_1.1}.
For them, we need to introduce a variant of the set $ M $ defined in \cite{GH} before Lemma~2.4:
Let $ L : \RR^n \to \RR $ be a positive linear form and let $ \d \in \RR^n $ be a vector with $ \d_n < 0 $ and $ L(\d) = 0 $.
We define
\[
	\cM_\d := \pr_\d ( \ZZ^n_\gqz )  \subset \RR^{n-1}_\d.
\]
Clearly, for $ w_1, w_2 \in \cM_\d $, we have $ w_1 + w_2 \in \cM_\d $ and 
$ \dim S_{\d,u} > 0 $ implies $ u \in \cM_\d $. 

Note that $ \cM_\d \neq M $ (of \cite{GH}).
In particular, $ \dim S_{\d,w} \neq 0 $ for every $ w \in \cM $. 
 
\begin{Lem}
	\label{Lem_2.4}
	Let $  \D, \cE, L, \d, R $ be as in Setup \ref{Setup}.
	Let $ u \in \RR^{n-1} $ and $ w \in \cM_\d $.
	Assume that $ S_{\d, u} $ contains two coprime monomials.
	Then 
	\[
		\dim S_{\d,u+w} = \dim S_{\d,u} + \dim S_{\d,w} - 1.
	\]
\end{Lem}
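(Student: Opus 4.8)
The plan is to reduce the statement to a purely combinatorial assertion about the lattice points in the fibres of $\pr_\delta$ and then carry out that count. Since $\pr_\delta$ (hence also $I_{\delta,\bullet}$, $S_{\delta,\bullet}$, $\cM_\delta$) depends only on the direction of $\delta$, and since the hypothesis that $S_{\delta,u}$ contains two distinct monomials forces the line $\RR\delta$ to meet $\ZZ^n$ nontrivially, I will normalise $\delta\in\ZZ^n$ to be primitive; then $\ker(\pr_\delta)\cap\ZZ^n=\ZZ\delta$. For $v\in\RR^{n-1}$ the fibre $\pr_\delta^{-1}(v)$ is an affine line parallel to $\delta$, and since $\delta$ has a strictly negative coordinate ($\delta_n$) and a strictly positive one, this line meets $\RR^n_{\geq 0}$ in a compact segment (this is exactly the finiteness noted in the Remark after the Construction). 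Hence, when $I_{\delta,v}\neq\emptyset$, it has the shape $\{v_0+j\delta\mid 0\le j\le N\}$, where $v_0$ is its \emph{first} point (characterised by $v_0-\delta\notin\RR^n_{\geq 0}$), $v_0+N\delta$ its \emph{last} point, and $N+1=\dim_\KK S_{\delta,v}=\#I_{\delta,v}$. Write $\delta=\delta^+-\delta^-$ with $\delta^+,\delta^-\in\ZZ^n_{\geq 0}$ of disjoint support; note $(\delta^-)_n>0$.

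First I would use the coprimality hypothesis to pin down $I_{\delta,u}$. Put $m:=\dim S_{\delta,u}-1\ge1$ and write $I_{\delta,u}=\{v_0+j\delta\mid 0\le j\le m\}$ with $v_0$ its first point; let the two coprime monomials be $X^{v_0+j\delta}$ and $X^{v_0+k\delta}$ with $j<k$. If $j\ge1$, then $v_0+(j-1)\delta\in I_{\delta,u}\subseteq\ZZ^n_{\geq 0}$ gives $(v_0+j\delta)_i\ge\delta_i>0$, hence also $(v_0+k\delta)_i>0$, at every index $i$ with $\delta_i>0$; as such an index exists this contradicts coprimality, so $j=0$. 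Symmetrically, using $\delta_n<0$ and $v_0+(k+1)\delta$, one gets $k=m$. Thus the coprime monomials are $X^{v_0}$ and $X^{v_0+m\delta}$, and comparing their exponents coordinate by coordinate (reading off $(v_0)_i=0$ when $\delta_i\ge0$ and $(v_0)_i=m|\delta_i|$ when $\delta_i<0$, using $m\ge1$) forces $v_0=m\delta^-$; that is, $I_{\delta,u}=\{\,m\delta^-+j\delta\mid 0\le j\le m\,\}$.

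Next I would combine this with $w$. Since $w\in\cM_\delta$, the fibre $I_{\delta,w}$ is nonempty; let $p_0$ be its first point, $l:=\dim S_{\delta,w}-1\ge0$, and $r:=p_0-l\delta^-$. From $p_0+l\delta\in\ZZ^n_{\geq 0}$ one reads off $r\in\ZZ^n_{\geq 0}$. Then $(m+l)\delta^-+r=v_0+p_0$ projects to $u+w$, so it lies in $I_{\delta,u+w}$, and every element of $I_{\delta,u+w}$ is of the form $(m+l)\delta^-+r+t\delta$ with $t\in\ZZ$. Imposing $(m+l)\delta^-+r+t\delta\in\ZZ^n_{\geq 0}$ coordinate by coordinate, the admissible $t$ are exactly the integers with
\[
 \max_{\,\delta_i>0}\Bigl(-\tfrac{r_i}{\delta_i}\Bigr)\ \le\ t\ \le\ \min_{\,\delta_i<0}\Bigl(m+l+\tfrac{r_i}{|\delta_i|}\Bigr).
\]
Because $r\ge0$, the left-hand bound is $\le0$ and the right-hand bound is $\ge m+l$; because $p_0$ is the first point of $I_{\delta,w}$ there is an index $i$ with $\delta_i>0$ and $r_i=(p_0)_i<\delta_i$, so the left bound is $>-1$; because $p_0+l\delta$ is the last point there is an index $i$ with $\delta_i<0$ and $r_i<|\delta_i|$, so the right bound is $<m+l+1$. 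Hence the admissible $t$ are precisely $0,1,\dots,m+l$, and
\[
 \dim S_{\delta,u+w}=\#I_{\delta,u+w}=m+l+1=\dim S_{\delta,u}+\dim S_{\delta,w}-1 .
\]

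The delicate step is this final count: one would like $r_i<\delta_i$ (resp.\ $r_i<|\delta_i|$) for \emph{all} relevant $i$, which is false in general; what saves the argument is that it suffices to have the inequality at \emph{one} index on each side, and precisely this is guaranteed by $p_0$ and $p_0+l\delta$ being the first and last lattice points of $I_{\delta,w}$. Keeping these one-sided estimates straight (together with the harmless reduction to primitive $\delta$ so that $\ker\pr_\delta\cap\ZZ^n=\ZZ\delta$) is the only real obstacle; the coordinatewise bookkeeping is routine.
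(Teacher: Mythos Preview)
Your proof is correct and follows the same strategy the paper indicates (and attributes to \cite{GH}): pass from $\dim S_{\d,v}$ to $\#I_{\d,v}$ and reduce to counting lattice points in $\ZZ^n_{\geq 0}$ on the Minkowski sum of two parallel segments. The paper merely sketches this reduction and refers to \cite{GH} for the details; you have carried them out explicitly, including the normalisation of $\d$ to a primitive integer vector (justified via the two-monomial hypothesis), the identification $v_0=m\d^-$ forced by coprimality, and the two one-sided endpoint estimates coming from the first/last lattice points of $I_{\d,w}$. Your closing remark about needing the strict inequality only at a single index on each side is exactly the point that makes the count come out to $m+l+1$ rather than something larger, and you handle it correctly.
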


\noindent 
The same arguments as in the proof for \cite{GH} Lemma 2.4 apply:
Using that the dimension of $ S_{\d,u} $ coincides with the number of elements in $ I_{\d,u} $ 
(analogously for $ \dim S_{\d,w} $),
the proof reduces to the combinatorial problem of determining the number of points in $ \ZZ^n_\gqz $ appearing on the sum of two parallel segments.
For more details, we refer to \cite{GH}.

The assumption that $ S_{\d, u} $ contains two coprime monomials is essential as the following example shows.
This is the reason, why we have to impose in Theorem~\ref{Thm_1.1} that $ G $ is not divided by any variable.
Another example for this (in the context of factoring a given element $ f \in R $) is given in \cite{GH} Remark 2.6.  

\begin{Ex}
	Let $ R := \KK[[x_1, x_2]] $, for any field $ \KK $.
	Consider $ \d = (3,-2) \in \RR^2 $.
	For $ w \in \RR $, we have 
	\[
		\pr_{\d}^{-1} (w) = \{ 
		v = (v_1,v_2) \in \RR^2_\gqz \mid 
		2 v_1 + 3 v_2 =  2w
		\}.
	\]
	The following picture shows
	$  \pr_{\d}^{-1} (3.5) $ (red),
	$  \pr_{\d}^{-1} (6.5) $ (blue),
	and $  \pr_{\d}^{-1} (10) $ (black),
	where filled points are lattice points corresponding to elements in $ I_{\d,w} $.
\begin{center}
	\begin{tikzpicture}[scale=0.45]

	\foreach \i in {-1,...,11}
	{
		\foreach \x in {-1,...,8}
		{
			\draw[fill = white] (\i,\x) circle [radius=0.09];	
		}
	}

	\draw[->] (-1.5,0) -- (11.5,0) node[right]{$e_1$};
	\draw[->] (0,-1.5) -- (0,8.5) node[left]{$e_2$};

	\draw[thick, red] (-3/2,10/3) -- (11/2,-4/3);
	\draw[thick, blue] (-3/2,16/3) -- (17/2,-4/3);
	\draw[thick, black] (-3/2,23/3) -- (23/2,-1);
	
	\draw[fill = white] (-1,3) circle [radius=0.09];	
	\draw[fill = white] (5,-1) circle [radius=0.09];	
	\draw[fill = red] (2,1) circle [radius=0.09];	
	\draw[fill = white] (-1,5) circle [radius=0.09];		
	\draw[fill = blue] (2,3) circle [radius=0.09];	
	\draw[fill = blue] (5,1) circle [radius=0.09];	
	\draw[fill = white] (8,-1) circle [radius=0.09];			
	\draw[fill = black] (1,6) circle [radius=0.09];	
	\draw[fill = black] (4,4) circle [radius=0.09];	
	\draw[fill = black] (7,2) circle [radius=0.09];	
	\draw[fill = black] (10,0) circle [radius=0.09];	
	
	\end{tikzpicture}
\end{center} 
	Thus,
	$ \dim S_{\d, 3.5} = \# I_{\d, 3.5} = 1 $,
	$ \dim S_{\d, 6.5} = \# I_{\d, 6.5} = 2 $,
	and $ \dim S_{\d, 10} = \# I_{\d, 10} = 4 $.
	In particular,
	$ \dim S_{\d, 3.5} + \dim S_{\d, 6.5} - 1 = 2 \neq 4  = \dim S_{\d, 10} $.
	But clearly, $ S_{\d, 6.5} $ does not contain two coprime monomials.
\end{Ex}

\begin{Lem}
	\label{Lem_2.5}
	Let $  \D, \cE, L, \d, R $ be as in Setup \ref{Setup}.
	Let $ G \in S_{\d,u} $ and $ H \in S_{\d,w} $ be coprime polynomials.
	If $ G $ is not divisible by any monomial, then
	\[
		G S_{\d,w+i} + H S_{\d,u+i} = S_{\d,u+w+i},
		\ \ \
		\mbox{for every } i \in \cM_\d. 
	\]
\end{Lem}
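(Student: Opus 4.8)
The plan is to prove the nontrivial inclusion $S_{\d,u+w+i} \subseteq GS_{\d,w+i} + HS_{\d,u+i}$ by a dimension count inside the graded integral domain $\gr_L(R) \cong \KK[X_1,\ldots,X_n]$, using Lemma~\ref{Lem_2.4}. First note that the reverse inclusion $GS_{\d,w+i} + HS_{\d,u+i} \subseteq S_{\d,u+w+i}$ is clear, since multiplying by the $\d$-homogeneous element $G \in S_{\d,u}$ (resp.\ $H \in S_{\d,w}$) raises the $\d$-degree by $u$ (resp.\ by $w$); as all four spaces are finite-dimensional over $\KK$, it suffices to show the two sides have equal $\KK$-dimension. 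If $G$ were a nonzero scalar multiple of a monomial, the hypothesis that $G$ is not divisible by any variable would force $G \in \KK^\times$ and $u = 0$, in which case $GS_{\d,w+i}+HS_{\d,u+i} \supseteq S_{\d,w+i}=S_{\d,u+w+i}$ and we are done; so we may assume $\dim_\KK S_{\d,u} \geq 2$.

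Next I would compute $\dim_\KK(GS_{\d,w+i} + HS_{\d,u+i})$ by inclusion--exclusion. Since $\gr_L(R)$ is a domain and $G, H \neq 0$, multiplication by $G$, by $H$ and by $GH$ is injective, so $\dim_\KK(GS_{\d,w+i}) = \dim_\KK S_{\d,w+i}$ and $\dim_\KK(HS_{\d,u+i}) = \dim_\KK S_{\d,u+i}$. For the intersection: if $GP = HQ$ with $P \in S_{\d,w+i}$, $Q \in S_{\d,u+i}$, then $H \mid GP$ and the coprimality of $G,H$ give $H \mid P$; writing $P = HP'$ and comparing $\d$-degrees shows $P' \in S_{\d,i}$, while conversely $GHP' \in GS_{\d,w+i} \cap HS_{\d,u+i}$ for every $P' \in S_{\d,i}$. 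Hence $GS_{\d,w+i} \cap HS_{\d,u+i} = GH\cdot S_{\d,i}$, which has dimension $\dim_\KK S_{\d,i}$, and therefore
\[
	\dim_\KK(GS_{\d,w+i} + HS_{\d,u+i}) = \dim_\KK S_{\d,w+i} + \dim_\KK S_{\d,u+i} - \dim_\KK S_{\d,i}.
\]

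Now I would invoke Lemma~\ref{Lem_2.4}. The hypothesis that $G$ is not divisible by any variable forces $S_{\d,u}$ to contain two coprime monomials: the set $I_{\d,u}$ consists of the lattice points on a compact segment parallel to $\d$, with endpoint monomials $X^{\mathbf{p}}$ and $X^{\mathbf{q}}$, and for each variable $X_j$, since $X_j \nmid G$ some monomial of $S_{\d,u}$ has vanishing $j$-th exponent; as this exponent is monotone along the segment, its minimum $0$ is attained at $X^{\mathbf{p}}$ when $\d_j \geq 0$ and at $X^{\mathbf{q}}$ when $\d_j \leq 0$, so $X^{\mathbf{p}}$ and $X^{\mathbf{q}}$ have disjoint supports. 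Since $i \in \cM_\d$, and $w + i \in \cM_\d$ because $H \neq 0$ gives $w \in \cM_\d$ and $\cM_\d$ is closed under addition, Lemma~\ref{Lem_2.4} applied with $S_{\d,u}$ yields
\[
	\dim_\KK S_{\d,u+i} = \dim_\KK S_{\d,u} + \dim_\KK S_{\d,i} - 1, \qquad \dim_\KK S_{\d,u+w+i} = \dim_\KK S_{\d,u} + \dim_\KK S_{\d,w+i} - 1.
\]
Substituting the first identity into the formula of the previous paragraph makes the terms $\dim_\KK S_{\d,i}$ and $\dim_\KK S_{\d,u}$ collapse, leaving $\dim_\KK(GS_{\d,w+i} + HS_{\d,u+i}) = \dim_\KK S_{\d,w+i} + \dim_\KK S_{\d,u} - 1 = \dim_\KK S_{\d,u+w+i}$, which together with the inclusion proves the equality.

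I expect the main obstacle to be the third step: verifying that ``$G$ not divisible by any variable'' is exactly the condition that puts two coprime monomials into $S_{\d,u}$, so that Lemma~\ref{Lem_2.4} becomes applicable. This needs the explicit description of $I_{\d,u}$ as the lattice points of a compact segment in direction $\d$ (finiteness coming from $(\d\cdot\RR)\cap\RR^n_\gqz = \{0\}$) together with a short case distinction on the sign of $\d_j$. Everything after that is routine bookkeeping with dimensions, relying only on the domain property of $\gr_L(R)$ and on Lemma~\ref{Lem_2.4}.
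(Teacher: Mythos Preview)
Your argument is correct and is essentially the same as the paper's: the paper packages the dimension count as the exactness of the sequence
$0 \to S_{\d,i} \stackrel{\Phi}{\to} S_{\d,w+i} \times S_{\d,u+i} \stackrel{\Psi}{\to} S_{\d,u+w+i} \to 0$
with $\Phi(\eta)=(\eta H,-\eta G)$ and $\Psi(\psi,\varphi)=\psi G+\varphi H$, where your intersection computation $GS_{\d,w+i}\cap HS_{\d,u+i}=GH\cdot S_{\d,i}$ is exactly exactness at the middle term, and surjectivity of $\Psi$ is then deduced from Lemma~\ref{Lem_2.4} by the identical dimension identity. You additionally spell out why ``$G$ not divisible by any variable'' forces $S_{\d,u}$ to contain two coprime monomials, a detail the paper leaves to the reference.
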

	
	\noindent 
	The same proof as in \cite{GH} Lemma 2.5 applies:
	The idea is to show that the sequence 
	$ 0 \to S_{\d,i} \stackrel{\Phi}{\to} S_{\d,w+i} \times  S_{\d,u+i} \stackrel{\Psi}{\to}  S_{\d,u+w+i} \to 0 $ 
	is exact,
	where $ \Phi (\eta) := (\eta H, - \eta G ) $, for $ \eta \in S_{\d,i} $,
	and $ \Psi(\psi, \varphi) := \psi G + \varphi H $, for $ (\psi, \varphi ) \in  S_{\d,w+i} \times  S_{\d,u+i} $.
	The non-trivial part is the surjectivity of $ \Psi $ which can be deduced using Lemma \ref{Lem_2.4}.
	For more details, we refer to \cite{GH}.
	
\smallskip

In order to adapt the proof of Theorem \ref{Thm_1.1} for Theorem \ref{Thm_1.4},
one needs the following two results.
	
\begin{Lem}
	\label{Lem_2.7}
	Let $  \D, \cE, L, \d, R $ be as in Setup \ref{Setup}.
	Let $ G \in S_{\d,u} $ and $ H_j \in S_{\d,w_j} $, for $ u, w_j \in \cM_\d $ and $ j \in \{ 1,2\} $.
	Assume that, for every $ i \in \cM_\d $,
	\[
		G S_{\d,w_j+i} + H_j S_{\d,u+i} = S_{\d,u+w_j+i},
		\ \ j \in \{1, 2 \}. 
	\]
	Then, we have, for every  $ i \in \cM_\d $,
	\[
	G S_{\d,w_1+w_2+i} + H_1 H_2 S_{\d,u+i} = S_{\d,u+w_1+w_2+i}. 
	\]
\end{Lem}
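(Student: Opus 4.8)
The plan is to dispose of the trivial inclusion at once and then obtain the nontrivial one by \emph{chaining} the given identities for $j=1$ and $j=2$, feeding the output of one into the other — the usual two-factor bookkeeping behind a Hensel-type splitting. No new coprimality input or dimension count is needed: the hypothesis of this lemma is exactly the conclusion of Lemma~\ref{Lem_2.5} applied to each factor separately, and Lemma~\ref{Lem_2.4} has already done the combinatorial work.

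First I would record that $G\,S_{\d,w_1+w_2+i}+H_1H_2\,S_{\d,u+i}\subseteq S_{\d,u+w_1+w_2+i}$ for every $i\in\cM_\d$, since $G\in S_{\d,u}$, $H_1H_2\in S_{\d,w_1+w_2}$, and the decomposition $\gr_L(R)=\bigoplus_w S_{\d,w}$ is a ring grading, i.e.\ $S_{\d,a}S_{\d,b}\subseteq S_{\d,a+b}$. It then remains to prove the reverse inclusion. Fix $i\in\cM_\d$. Because $\cM_\d$ is closed under addition, $w_2+i\in\cM_\d$, so applying the hypothesis for $j=1$ with $w_2+i$ in place of $i$ gives
\[
S_{\d,u+w_1+w_2+i}=G\,S_{\d,w_1+w_2+i}+H_1\,S_{\d,u+w_2+i}.
\]
Next I would expand the second summand by the hypothesis for $j=2$ at level $i$, namely $S_{\d,u+w_2+i}=G\,S_{\d,w_2+i}+H_2\,S_{\d,u+i}$; multiplying by $H_1\in S_{\d,w_1}$ and using $H_1S_{\d,w_2+i}\subseteq S_{\d,w_1+w_2+i}$ together with commutativity (so $H_1G\,S_{\d,w_2+i}=G\,(H_1S_{\d,w_2+i})\subseteq G\,S_{\d,w_1+w_2+i}$) yields
\[
H_1\,S_{\d,u+w_2+i}=H_1G\,S_{\d,w_2+i}+H_1H_2\,S_{\d,u+i}\subseteq G\,S_{\d,w_1+w_2+i}+H_1H_2\,S_{\d,u+i}.
\]
Substituting this back into the displayed identity for $S_{\d,u+w_1+w_2+i}$ gives $S_{\d,u+w_1+w_2+i}\subseteq G\,S_{\d,w_1+w_2+i}+H_1H_2\,S_{\d,u+i}$, which is the claim.

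\textbf{Main obstacle.} There is no genuine obstacle: the statement is pure linear algebra in $\gr_L(R)\cong\KK[X_1,\dots,X_n]$ once Lemmas~\ref{Lem_2.4} and~\ref{Lem_2.5} are available. The only points needing (minor) care are that every shifted index ($w_2+i$ here) stays inside $\cM_\d$ so that the hypothesis genuinely applies, and that one tracks which factor gets absorbed into $G\,S_{\d,\bullet}$; both are immediate from closure of $\cM_\d$ under addition and from the multiplicativity of the grading.
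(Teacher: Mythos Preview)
Your argument is correct and matches what the paper indicates: it defers to \cite{GH} Lemma~2.7, describing the proof only as ``a short computation applying the hypothesis in a clever way,'' which is precisely your two-step chaining (apply the $j=1$ identity at shift $w_2+i$, then feed in the $j=2$ identity at shift $i$ and absorb $H_1G\,S_{\d,w_2+i}$ into $G\,S_{\d,w_1+w_2+i}$). One small remark: Lemmas~\ref{Lem_2.4} and~\ref{Lem_2.5} are not actually used here---the proof is self-contained from the stated hypothesis---so you may drop those references from your commentary.
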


\noindent 
The same proof as in \cite{GH} Lemma 2.7 applies:
This is a short computation applying the hypothesis in a clever way. 
For details, we refer to \cite{GH}.
	
\begin{Lem}
	\label{Lem_2.8}
	Let $  \D, \cE, L, \d, R, (x_1, \ldots, x_n) $ be as in Setup \ref{Setup}.
	Let $ G \in S_{\d,u} $ and $ H \in S_{\d,w} $ be coprime polynomials.
	If $ G $ is monic with respect to $ X_n $, then
	\[
	G S_{\d,w+i} + H S_{\d,u+i} = S_{\d,u+w+i},
	\ \ \
	\mbox{for every } i \in \cM_\d. 
	\]
\end{Lem}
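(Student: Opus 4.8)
Since Lemma~\ref{Lem_2.8} enters only in the proof of Theorem~\ref{Thm_1.4}, where $\cE$ is descendant, I will run the argument under the additional assumption that $\cE$ is descendant, i.e.\ $\d_i\geq 0$ for all $1\leq i\leq n-1$; then $\RR^{n-1}_\d=\RR^{n-1}_\gqz$. This additional assumption cannot be dropped: for instance, the conclusion fails for $n=3$, $\d=(1,-1,-1)$, $G=X_3$ and $H=X_1^2X_2$. The plan is to imitate the proof of Lemma~\ref{Lem_2.5}, whose only use of the assumption ``$G$ not divisible by any variable'' is to produce two coprime monomials in $S_{\d,u}$, so that Lemma~\ref{Lem_2.4} applies. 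Since ``$G$ monic with respect to $X_n$'' does not guarantee this (for $G=X_n$ the piece $S_{\d,u}$ may be one-dimensional), I would first split off the powers of $X_n$ and treat them separately.

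Write $G=X_n^kG_0$ with $k\geq 0$ maximal, so that $X_n\nmid G_0$. As $G$ is monic with respect to $X_n$, the monomial $X_n^{\deg_{X_n}G}$ occurs in $G$ with coefficient $1$; dividing by $X_n^k$ shows $X_n^{\deg_{X_n}G_0}$ occurs in $G_0$, whence $G_0$ is divisible by no variable $X_j$ with $j\neq n$ (it would otherwise divide $X_n^{\deg_{X_n}G_0}$) and by no $X_n$ by the choice of $k$; thus $G_0=1$ or $G_0$ is not divisible by any monomial. Moreover $G_0$ is coprime to $H$, and if $k\geq 1$ then $X_n$ is coprime to $H$ (since $\gcd(G,H)=1$ and $X_n\mid G$). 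Now Lemma~\ref{Lem_2.5} applied to $(G_0,H)$ gives $G_0S_{\d,w+i}+HS_{\d,u_0+i}=S_{\d,u_0+w+i}$ for all $i\in\cM_\d$, where $u_0$ is the $S_{\d,\bullet}$-degree of $G_0$ (a trivial identity when $G_0=1$). Since the identity $GS_{\d,w+i}+HS_{\d,u+i}=S_{\d,u+w+i}$ is symmetric in $(G,u)$ and $(H,w)$, Lemma~\ref{Lem_2.7} with those roles interchanged merges this identity for two ``$G$-factors'' into the one for their product; applying this to the $k$ copies of $X_n$ (of $S_{\d,\bullet}$-degree $u_1:=\pr_\d(e_n)$, where $e_n=(0,\dots,0,1)$) and to $G_0$, and using $ku_1+u_0=u$, it remains to prove, for $X_n\nmid H$,
\[
X_n\,S_{\d,w+i}+H\,S_{\d,u_1+i}=S_{\d,u_1+w+i}\qquad\text{for all }i\in\cM_\d.
\]

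For this, multiplication by $X_n$ identifies $X_nS_{\d,w+i}$ with the span of the monomials $X^B$, $B\in I_{\d,u_1+w+i}$, with $B_n\geq 1$; hence $S_{\d,u_1+w+i}/X_nS_{\d,w+i}$ is spanned by the monomials $X^B$ with $B\in I_{\d,u_1+w+i}$ and $B_n=0$, of which there is at most one, namely $B^{\ast}=(u_1+w+i,0)$, and it lies in $I_{\d,u_1+w+i}$ only if $u_1+w+i\in\ZZ^{n-1}$ (otherwise the quotient vanishes and nothing is to be proved). If $u_1+w+i\in\ZZ^{n-1}$, then $X_n\nmid H$ forces $w\in\ZZ^{n-1}$ (the monomial of $H$ of $X_n$-degree $0$ is $X^{(w,0)}$), hence $u_1+i\in\ZZ^{n-1}$; since moreover $u_1+i\in\RR^{n-1}_\gqz$ --- this is where descendance is used --- the index $D^{\ast}:=(u_1+i,0)$ lies in $I_{\d,u_1+i}$, and $H\,X^{D^{\ast}}$ is congruent modulo $X_nS_{\d,w+i}$ to a nonzero multiple of $X^{B^{\ast}}$. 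Thus $HS_{\d,u_1+i}$ surjects onto the quotient and the identity follows.

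I expect the main obstacle to be exactly this reduced identity for $G=X_n$: there Lemma~\ref{Lem_2.4} is unavailable (the relevant graded piece need not contain two coprime monomials), and the descendant hypothesis is genuinely required, being precisely what forces $u_1+i\in\RR^{n-1}_\gqz$ so that $D^{\ast}$ exists.
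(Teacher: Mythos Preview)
Your argument is correct and follows essentially the same route as the paper (which in turn follows \cite{GH}): reduce to the special case $G=X_n$ and then combine with the remaining factor $G_0$ via Lemmas~\ref{Lem_2.5} and~\ref{Lem_2.7}. Your direct treatment of the case $G=X_n$, $X_n\nmid H$ is a mild generalization of the paper's stated special case ($G=X_n$, $H\in S_{\d,w}\cap\KK[X_1,\dots,X_{n-1}]$, i.e.\ $H$ a single monomial free of $X_n$), but the underlying computation is the same. Your observation that the descendant hypothesis on $\cE$ is genuinely needed---supported by the counterexample $\d=(1,-1,-1)$, $G=X_3$, $H=X_1^2X_2$---is a useful sharpening: the paper only invokes Lemma~\ref{Lem_2.8} inside the proof of Theorem~\ref{Thm_1.4}, where $\cE$ is descendant, so no harm is done there, but the lemma as stated under Setup~\ref{Setup} alone is not correct without this extra assumption.
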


\noindent 
The same proof as in \cite{GH} Lemma 2.8 applies
(recall also the paragraph before Lemma 2.8 in \cite{GH}):
First, one proves the special case $ G = X_n $
and $ H \in S_{\d,w} \cap \KK[X_1, \ldots, X_{n-1}] $.
The rest follows then by Lemmas \ref{Lem_2.5} and \ref{Lem_2.7}.
For more details, we refer to \cite{GH}.

\begin{Rk}
	In contrast to \cite{RS}, we do not project from a distinguished point to $ \RR^{n-1} $. 
	(One candidate for such a point would be the end point $ \b $ of the loose edge).
	The reason for projecting along the vector $ \d $ given by the loose edge is to obtain an appropriate refinement of the grading of $ \gr_\cE(R) $ such that Lemma \ref{Lem_2.4} holds which is one of the key ingredients for the proofs.
	
	Let us mention that the projection of $ \NP(f) $ along $ \d $ is $ \d $-orthant 
	if and only if the projection of $ \NP(f) $ from $ \b $ to $ \RR^{n-1}_\gqz $ is orthant.
	Hence, this is another reason why Lemma \ref{Lem:loose=>orthant} yields a connection to \cite{RS}.
\end{Rk}

\smallskip 	

\section{Proofs}

We come to the proofs of the main theorems. 
The key step that allows to extend the results in \cite{GH} to any complete regular local ring is the following:

\smallskip 

Let
$ (R, \fm , \KK ) $ be a regular local ring, still not necessarily complete, with regular system of parameters $ (x) = (x_1, \ldots, x_n) $.
Let $ \D \subset \RR^n_\gqz $ be a $ F $-subset, $ \cE \subset \D $ be a loose edge, and $ L : \RR^n \to \RR $ be a positive linear form defining $ \cE $.
As in Setup \ref{Setup}, we introduce $ \d \in \RR^n $ with $ \d_n < 0 $ and $ L(\d)= 0 $.

Let $ w \in \cM_\d $ and let $ G \in S_{\d,w} \subset \gr_L(R) \cong \KK[X_1, \ldots, X_n] $.
We can write $ G $ as a finite sum
\[
	G = \sum_{A \in \ZZ^n_\gqz} \l_A X^A,
	\ \ \ \mbox{ for } \l_A \in \KK.
\]
Note that $ \l_A \neq 0 $ implies $ \pr_\d (A) = w $.
For every $ A \in \ZZ_\gqz^n $ with $ \l_A \neq 0 $, we choose $ \r_A \in R^\times $ with the property 
\[
	\r_A \equiv \l_A \mod \fm. 
\]
Otherwise, we set $ \r_A := 0 \in R $.
Using this, we define 
\[
	g := \sum_{A} \r_A x^A \in R.
\]
Clearly, the image of $ g $ in $ \gr_L(R) $ is $ G $.
(In fact, we can apply this procedure for any element in $ \gr_L(R) $, not only for those in $ S_{\d, w} $).
Note that $ g $ is not unique and, in particular, $ g $ depends on a choice of a system of representatives in $ R $ for the residue field $ \KK = R/\fm $.
On the other hand, if $ \KK \subset R $, then we can uniquely choose $ \r_A := \l_A $.

\smallskip

Using the above, we adapt the proofs of \cite{GH} to prove our results.
Even though this is straight forward, we believe it is more pedagogical to give the proofs of the theorems.
Moreover, we present a slightly different argument using Lemma \ref{Lem:loose=>orthant}.

\medskip

\begin{proof}[Proof of Theorem \ref{Thm_1.1}]
	Let $ L : \RR^n \to \RR $ be a positive linear form defining the edge $ \cE $.  
	Let 
	\[ 
		a_0 := \nu_L(f) 
		\ \ \ 
		\mbox{ and }
		\ \ \
		v := \pr_\d (\a) = \pr_\d (\b) \in \cM_\d \subset \RR^{n-1}
		,
	\] 
	where $ \a, \b \in \cE $ are the end points of the loose edge $ \cE $.
	Without loss of generality, we may assume that $ \d \in \RR^n $ fulfills the properties of Setup \ref{Setup}. 
	
	By hypothesis, we have $ \ini_L(f) = G \cdot H \in S_{\d, v}. $
	We set $ G_u := G \in S_{\d, u} $ and $ H_w := H \in S_{\d,w} $, for $ u,w \in \cM_\d $.
	Let $ g_u \in R $ (resp.~$ h_w \in R $) be a lift of $ G_u $ (resp.~$H_w $), as described before.
	We define $ \phi_1 :=  g_u \cdot h_w $, which is our first approximation of $ f $.
	For
	\[
		f_1 := f - \phi_1 = f - g_u \cdot h_w,
		\ \ \ \mbox{ we have } \ \ \
		a_1 := \nu_L(f_1) > a_0.
	\]
	The vertices of $ \NP(f_1) $ lie in $ \ZZ^n_\gqz $
	which implies that the vertices of $ \NP(f_1)_\d $ are contained in $ \cM_\d $.
	Since the projection $ \NP(f)_\d $ is $ \d $-orthant (Lemma~\ref{Lem:loose=>orthant}),
	we get that each vertex of $ \NP(f_1)_\d $ is of the form 
	\[
		v + i = u + w + i, 
		\ \ \ \mbox{ for some } i \in \cM_\d, i \neq 0 .
	\]
	In particular, $ \ini_L(f_1) \in \gr_L(R) $ can be written as
	\[
		\ini_L(f_1)  = \sum_{\substack{i \in \cM_\d\\[2pt] L(u+w+i,0) = a_1 }} F_{u+w+i}^{(1)}\, ,
		\ \ \
		\mbox{ for } F_{u+w+i}^{(1)} \in S_{\d, u+w+i}.
	\]
	By assumption $ G_u $ is not divisible by a monomial, hence, by Lemma \ref{Lem_2.5}, we have
	\[
	G_u S_{\d,w+i} + H_w S_{\d,u+i} = S_{\d,u+w+i},
	\ \ \
	\mbox{for every } i \in \cM_\d. 
	\]
	Thus, for every $ i \in \cM_\d $ with $ L(u+w+i,0) = a_1 $, 
	there are $ H_{w+i} \in S_{\d, w+i} $ and 
	$  G_{u+i} \in S_{\d, u+i} $ such that
	$
		F_{u+w+i}^{(1)} = 
		G_u H_{w+i} + H_w G_{u+i}.
	$
	We choose $ h_{w+i}, g_{u+i} \in R $, as described before and define the second approximation of $ f $ by
	\[
		\phi_2 := \Big(g_u + \sum_{\substack{i \in \cM_\d, \, i \neq 0\\[2pt] L(u+w+i,0) = a_1 }} g_{u+i} \Big)
		\Big(h_w + \sum_{\substack{i \in \cM_\d,\, i \neq 0\\[2pt] L(u+w+i,0) = a_1 }} h_{w+i}\Big)
		\in R.
	\]
	Note that $ \phi_2 = g_u h_w + \sum_{i \neq 0} (g_u h_{w+i} + h_w g_{u+i}) + \sum_{i,j \neq 0 } g_{u+i} h_{w+j} $ 
	and, by construction, $ \nu_L(g_{u+i} h_{w+j}) > a_1 $ since $ L $ is positive.
	If we define
	\[
		f_2 := f - \phi_2,
		\ \ \ \mbox{ we have } \ \ \ 
		a_2 := \nu_L(f_2) > a_1.
	\]
	We continue the construction and obtain
	\[
		\widehat g := \sum_{i\in \cM_\d } g_{u+i},
		\ \ \
		\widehat h := \sum_{i\in \cM_\d } h_{w+i} 
		\ \in \
		\widehat R 
	\]
	such that $ f = \widehat g \cdot \widehat h $ in the completion $ \widehat R $,
	as desired.
	
	Recall that $ L $ is a positive linear form on $ \RR^n $ defining the edge $ \cE $.
	Let $ \cE_1 $ (resp.~$ \cE_2 $) be the face of the Newton polyhedron of $ \widehat g $ (resp.~$ \widehat h $) determined by the same $ L $.
	We have that $ \ini_{\cE_1} (\widehat g)  = G $ and $ \ini_{\cE_1} (\widehat h) = H $,
	$ \cE = \cE_1 + \cE_2 $, 
	$
		\ini_\cE (f) = \ini_{\cE_1} (\widehat g)  \cdot \ini_{\cE_1} (\widehat h)
	$,
	and $ \cE_1 $ is parallel to $ \cE $.
	(The latter is also true for $ \cE_2 $ if it is not a vertex).
\end{proof}

\smallskip

Corollary \ref{Cor_1.2} is the same as for \cite{GH} Corollary 1.2, 
except that the reference to loc.~cit.~Lemma 2.2 has to be replaced by Lemma \ref{Lem_2.2} in the present paper. 

\smallskip

\begin{proof}[Proof of Theorem \ref{Thm_1.4}]
	We follow \cite{GH}.
	Using Lemma~\ref{Lem_2.8} instead of Lemma~\ref{Lem_2.5}
	in the proof of Theorem \ref{Thm_1.1},
	we find $ \overline g, \overline h \in \widehat{R}[[z]] $ 
	such that 
	\[
		f = \overline g \cdot \overline h,
		\ \ \ \
		\ini_{\cE_1} (\overline g) = G,
		\ \
		\ini_{\cE_2} (\overline h) = H,
		\ \ \
		\cE_1 + \cE_2 = \cE. 
	\]
	Since $ \cE $ is descendant and $ G $ is monic in $ Z $,
	the Newton polyhedron of $ \overline g $ has a vertex of the form $ (0, \ldots, 0, d) $, for some $ d \in \ZZ_+ $.
	Hence, the monomial $ \epsilon z^d $ appears 
	in an expansion of $ \overline g $,
	for some unit $ \epsilon \in \widehat{R}[[z]]^\times $.
	The Weierstra{\ss} preparation theorem 
	(\cite{Bourbaki}, Ch.~VII, \S3, no.~8, Proposition 6, p.~41)
	implies that
	there exist a unit $ u \in \widehat{R}[[z]]^\times $ and $ \widehat g \in \widehat R [z] $ such that
	\[
		\overline g = u \widehat g.
	\]
	We define $ \widehat h := u \overline h $ and obtain that
	$ f = \widehat g \cdot \widehat h $.
	Since $ f, \widehat g \in \widehat R [z] $, we also have 
	$ \widehat h \in \widehat R[z] $.
	The remaining parts of the theorem follow easily.
\end{proof}

\smallskip

\end{document}